\newcommand{\bburl}[1]{\textcolor{blue}{\url{#1}}}
\newtheorem{thm}{Theorem}[section]
\newtheorem{cor}[thm]{Corollary}
\newtheorem{lem}[thm]{Lemma}
\newtheorem{prop}[thm]{Proposition}
\newtheorem{que}[thm]{Question}
\DeclareFixedFont{\ttb}{T1}{txtt}{bx}{n}{12} 
\DeclareFixedFont{\ttm}{T1}{txtt}{m}{n}{12}  
\definecolor{deepblue}{rgb}{0,0,0.5}
\definecolor{deepred}{rgb}{0.6,0,0}
\definecolor{deepgreen}{rgb}{0,0.5,0}
\newcommand\pythonstyle{\lstset{
language=Python,
basicstyle=\ttm,
morekeywords={self},              
keywordstyle=\ttb\color{deepblue},
emph={MyClass,__init__},          
emphstyle=\ttb\color{deepred},    
stringstyle=\color{deepgreen},
frame=tb,                         
showstringspaces=false
}}
\newcommand\pythoninline[1]{{\pythonstyle\lstinline!#1!}}
\numberwithin{equation}{section}
\DeclareFontFamily{U}{mathx}{}
\DeclareFontShape{U}{mathx}{m}{n}{<-> mathx10}{}
\DeclareSymbolFont{mathx}{U}{mathx}{m}{n}
\DeclareMathAccent{\widehat}{0}{mathx}{"70}
\DeclareMathAccent{\widecheck}{0}{mathx}{"71}
\begin{document}

\title{Fixed-Term Decompositions Using Even-Indexed Fibonacci Numbers}
\author{H\`ung Vi\d{\^e}t Chu}
\address{Department of Mathematics, Texas A\&M University, College Station, TX 77843, USA}
  \email{hungchu1@tamu.edu}

\author{Aney Manish Kanji}
\address{Department of Mathematics, Texas A\&M University, College Station, TX 77843, USA}
\email{aneykanji\_tamu@tamu.edu}

\author{Zachary Louis Vasseur}
\address{Department of Mathematics, Texas A\&M University, College Station, TX 77843, USA}
\email{zachary.l.v@tamu.edu}
\thanks{The work was partially supported by the College of Arts \& Sciences at Texas A\&M University. A. M. Kanji and Z. L. Vasseur are undergraduates at Texas A\&M University.}

\subjclass[2020]{11B39}

\keywords{Zeckendorf decomposition, even-indexed Fibonacci numbers, fixed terms}

\maketitle

\begin{abstract}
As a variant of Zeckendorf's theorem, Chung and Graham proved that every positive integer can be uniquely decomposed into a sum of even-indexed Fibonacci numbers, whose coefficients are either $0, 1$, or $2$ so that between two coefficients $2$, there must be a coefficient $0$. This paper characterizes all positive integers that do not have $F_{2k}$ ($k\ge 1$) in their decompositions. This continues the work of Kimberling, Carlitz et al., Dekking, and Griffiths, to name a few, who studied such a characterization for Zeckendorf decomposition. 
\end{abstract}

\section{Introduction}
We define the Fibonacci sequence $(F_n)_{n=1}^\infty$ as $F_1 = F_2 = 1$ and $F_{n+1} = F_{n} + F_{n-1}$ for $n\ge 2$. 
Zeckendorf's theorem \cite{Ze} states that every positive integer can be uniquely written as a sum of nonadjacent Fibonacci numbers from $(F_n)_{n=2}^\infty$. The sum is called the \textit{Zeckendorf decomposition} of a positive integer. Note that we start from $F_2$ since otherwise, $F_1 = F_2 = 1$ ruins uniqueness. Zeckendorf-type decompositions have been extensively studied in the literature: to name a few, see \cite{CFHMN, Cha, CLM, Da1, Da2, DDKMMV, DFFHMPP, HW, MMMS, MMMMS} for various generalizations to other sequences, \cite{Ba, De1, De2, DS, Gr1, Ho, Sh} for digits in the decomposition, and \cite{BEFM1, BEFM2, LLMMSXZ, MSY} for Zeckendorf games. 

A beautiful Zeckendorf-type decomposition that uses even-indexed Fibonacci numbers only is due to Chung and Graham \cite{CG}:

\begin{thm}\cite[Lemma 1]{CG}\label{CGlemma}
    Every positive integer $n$ can be uniquely represented as a sum
    $n = \sum_{i\ge 1} c_i F_{2i}$, where $c_i$'s are in $\{0, 1, 2\}$ so that if $c_i = c_j = 2$ with $i < j$, then for some $k$, $i < k < j$, we have $c_k = 0$. 
\end{thm}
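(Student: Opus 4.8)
The plan is to recast the admissibility condition as a walk in a two‑state automaton, prove a sharp bound on the values such walks can represent, and then run a greedy algorithm from the top index downward, showing that it both succeeds (giving existence) and is the only possibility (giving uniqueness). Read a coefficient string $(c_m,c_{m-1},\dots,c_1)$ from the largest index $m$ downward, and call the walk \emph{clean} at a given point if a further coefficient $2$ may legally be appended there (no $2$ yet, or a $0$ since the last $2$) and \emph{dirty} otherwise. The legal transitions are: from clean, a $0$ or a $1$ stays clean and a $2$ goes to dirty; from dirty, a $0$ returns to clean, a $1$ stays dirty, and a $2$ is illegal. A string violates the condition of Theorem~\ref{CGlemma} precisely when, processed this way, it appends a $2$ while dirty, so the admissible strings are exactly the legal walks starting from the clean state.

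I would first record two elementary facts: the carry identity $F_{2i+2}=3F_{2i}-F_{2i-2}$ (equivalently $F_{2i+2}+F_{2i-2}=3F_{2i}$, with $F_0=0$), and $\sum_{i=1}^{m}F_{2i}=F_{2m+1}-1$. Using these I would prove, by simultaneous induction on $m$, the \textbf{key bound}: over all legal walks on the indices $m,m-1,\dots,1$, the maximum value of $\sum_{i=1}^m c_iF_{2i}$ is $F_{2m+2}-1$ when the walk starts clean (attained at $c_m=2$, $c_{m-1}=\dots=c_1=1$) and $F_{2m+1}-1$ when it starts dirty (attained at $c_m=\dots=c_1=1$). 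The inductive step splits on $c_m$; the only real content is that $2F_{2m}-1<F_{2m+2}-1$ (which is $F_{2m}>F_{2m-2}$), and that choosing $c_m=2$ from the clean state leaves a remainder of at most $F_{2m+2}-1-2F_{2m}=F_{2m-1}-1$, which is exactly the dirty bound on the indices $m-1,\dots,1$.

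With the key bound in hand, \textbf{existence} is the greedy construction: given $n\ge 1$, let $m$ be maximal with $F_{2m}\le n$, so that $n\le F_{2m+2}-1$; then process $i=m,m-1,\dots,1$, maintaining a remaining value $r$ (initially $n$) and a state (initially clean), at each step setting $c_i=\lfloor r/F_{2i}\rfloor$, replacing $r$ by $r-c_iF_{2i}$, and updating the state. The invariant ``$r$ is at most the key bound for the current state on the indices still to be processed'' holds initially and is preserved at each step — this is where the arithmetic enters, e.g.\ the dirty bound $r\le F_{2i+1}-1<2F_{2i}$ forces $c_i\in\{0,1\}$, exactly what legality of the step requires — so the walk produced is legal, and since the bound for zero remaining indices is $0$, the process ends with $r=0$. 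For \textbf{uniqueness}, I would show that any admissible representation of $n$ must make these same choices: if $m'$ is its top nonzero index then $F_{2m'}\le n$ and, by the clean key bound, $n\le F_{2m'+2}-1$, so $m'=m$; and the top coefficient is forced by comparing $n$ with $2F_{2m}$ and with the maximal tail value in the resulting state on the indices $m-1,\dots,1$ (for instance $c_m=1$ would leave a tail value $\ge F_{2m}$, violating the clean tail bound $F_{2m}-1$). Peeling off $c_m$ and recursing — the sub‑representation is an admissible walk on $m-1,\dots,1$ from the updated state, of value $n-c_mF_{2m}$, again within the relevant key bound — completes a strong induction.

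The main obstacle I anticipate is keeping the clean/dirty bookkeeping consistent with the carry identity $F_{2i+2}=3F_{2i}-F_{2i-2}$: one must check that right after a $2$ is placed the remaining value is automatically less than $F_{2i}$ (indeed $\le F_{2i-1}-1$), so the greedy rule cannot place a second $2$ before a $0$ occurs. This self‑enforcement of the $2$–$0$–$2$ condition is the heart of the argument, and everything else is routine induction and bounded arithmetic. (Alternatively, once existence is known, uniqueness also follows by counting: a transfer‑matrix argument on the two‑state automaton shows there are exactly $F_{2m+2}$ legal walks of length $m$ from the clean state, matching $|\{0,1,\dots,F_{2m+2}-1\}|$, so the value map is a bijection; but the forced‑greedy argument is more transparent.)
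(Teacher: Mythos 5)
The paper does not prove this statement: it is quoted verbatim as Lemma 1 of Chung and Graham \cite{CG}, so there is no internal proof to compare against. Judged on its own, your proposal is a correct and essentially complete self-contained proof. The two-state (clean/dirty) automaton is exactly the right way to encode the ``between two $2$'s there is a $0$'' condition, and your key bound is the crux: the maximal legal value on indices $m,\dots,1$ is $F_{2m+2}-1$ from the clean state and $F_{2m+1}-1$ from the dirty state, both verified correctly via $\sum_{i=1}^{m}F_{2i}=F_{2m+1}-1$ and $2F_{2m}+F_{2m-1}-1=F_{2m+2}-1$. The resulting partition of $[0,F_{2m+2}-1]$ into $[0,F_{2m}-1]$, $[F_{2m},2F_{2m}-1]$, $[2F_{2m},F_{2m+2}-1]$ (and of $[0,F_{2m+1}-1]$ into $[0,F_{2m}-1]$, $[F_{2m},F_{2m+1}-1]$ in the dirty state) is disjoint and exhaustive, which simultaneously forces the greedy choice (uniqueness) and shows it never gets stuck (existence, with the terminal bound $F_2-1=F_1-1=0$ forcing $r=0$). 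Your observation that a $2$ leaves a remainder at most $F_{2m-1}-1<F_{2m}$, so the automaton's prohibition is automatically respected by the greedy digit $\lfloor r/F_{2i}\rfloor$, is indeed the heart of the matter. The transfer-matrix count $F_{2m+2}$ of legal length-$m$ clean walks is also correct and gives the advertised alternative finish. The only cosmetic caveat is that the endpoint cases ($F_{2i+1}-1<2F_{2i}$ holds with equality-adjacent slack at $i=1$ since $F_1=F_2$) should be checked once explicitly, but they cause no trouble.
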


We call the decomposition in Theorem \ref{CGlemma} the \textit{Chung-Graham decomposition} of an integer. 
This paper answers the following question.
\begin{que}\normalfont\label{mainquestion}
Given an even-indexed Fibonacci number $F_{2N}$, $N\ge 1$, what are the positive integers whose Chung-Graham decomposition contains neither $F_{2N}$ nor $2F_{2N}$?    
\end{que}

The analog of Question \ref{mainquestion} is well-known for Zeckendorf decomposition. Kimberling \cite{Kim} studied numbers without $1$ in their decomposition. Another pioneering paper is due to Carlitz et al.\ \cite{CHS} who described the set $Z(N)$ of all positive integers having the summand $F_N$ in their Zeckendorf decomposition, depending on the parity of $N$. Later, Griffiths \cite{Gr2} gave 
$$Z(N)\ =\ \left\{F_N\left\lfloor \frac{n+\phi^2}{\phi}\right\rfloor + nF_{N+1} + j\,:\, 0\le j\le F_{N-1} - 1, n\ge 0\right\}.$$ 
Dekking \cite{De2} characterized all integers that share the same \textit{initial} Zeckendorf decomposition, using the so-called compound Wythoff sequences and generalized Beatty sequences. 

It is worth mentioning that Griffiths' analysis \cite{Gr2} can also be used to determine all positive integers having $\{F_N: N\in A\}$ for some certain sets $A$  in their Zeckendorf decomposition. The idea is to analyze consecutive rows of the table of all numbers having $F_N$ as the minimum summand in their Zeckendorf decomposition and employed properties of the golden string, which we shall discuss in Section \ref{gold}. Recently, Chu \cite{Ch} generalized the golden string to study a generalized Zeckendorf decomposition.

In the present paper, we answer Question \ref{mainquestion} using the same method as in \cite{Ch, Gr2} while dealing with a considerably more involved table due to the appearance of the coefficient $2$ in the Chung-Graham decomposition. In the process, we need to utilize more properties of the golden string (see Propositions \ref{kp1} and \ref{kp2}, for example). We state our main result. 

\begin{thm}\label{mtheo}
For $N\ge 1$, the set of all positive integers that do not have $F_{2N}$ nor $2F_{2N}$ in their Chung-Graham decomposition is given by 
\begin{align*} B_{2N}&\ :=\ [1, F_{2N}-1]\cup \\
&\bigcup_{k=N+1}^\infty \left\{j + F_{2k}, j+(n+2)F_{2k} + \left\lfloor \frac{n+1}{\phi}\right\rfloor F_{2k-1}\,:\, 0\le j\le F_{2N}-1, n\ge 0\right\},\end{align*}
where $\phi = (\sqrt{5}+1)/2$.
\end{thm}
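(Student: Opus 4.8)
The plan is to build the set $B_{2N}$ from the bottom up by tracking, for each $k \ge N$, precisely which integers have $F_{2k}$ as the largest even-indexed Fibonacci number appearing with nonzero coefficient in their Chung-Graham decomposition, and then identifying within each such block the integers whose coefficient on $F_{2N}$ is $0$. First I would set up the ``Chung-Graham table'' analogous to the Zeckendorf table used by Griffiths: row $k$ lists, in increasing order, all positive integers whose top nonzero Chung-Graham coefficient sits at index $2k$; by Theorem~\ref{CGlemma} the integers in rows $N, N+1, \dots$ partition all of $\mathbb{Z}_{\ge 1}$. The integers in $[1, F_{2N}-1]$ are exactly those whose decomposition involves only $F_{2i}$ with $i < N$, and these trivially lie in $B_{2N}$; this accounts for the first piece. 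For $k \ge N+1$, an integer $m$ in row $k$ can be written as $m = j + (\text{contribution of indices} \ge N)$ where $0 \le j \le F_{2N}-1$ is the ``tail'' carried by indices below $N$, and $m \in B_{2N}$ iff the coefficient of $F_{2N}$ in the decomposition of $m - j$ is zero.

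The core combinatorial step is to describe, for fixed $k \ge N+1$, the sublist of row $k$ consisting of those $m-j$ with $c_N = 0$. Here I expect to use the golden string machinery from Section~\ref{gold} (Propositions~\ref{kp1}, \ref{kp2}): the Chung-Graham decompositions within a single row, read as strings of coefficients in $\{0,1,2\}$ subject to the ``no two $2$'s without an intervening $0$'' rule, are governed by a substitution/morphism whose fixed point is a decoration of the golden string, and the positions where $c_N = 0$ are selected by a Beatty-type condition. Concretely, I would show that as $n$ ranges over $\mathbb{Z}_{\ge 0}$, the ``base values'' (i.e., $m-j$ with $j=0$) in row $k$ with $c_N = 0$ are exactly
\begin{equation*}
F_{2k}, \qquad (n+2)F_{2k} + \left\lfloor \frac{n+1}{\phi} \right\rfloor F_{2k-1}, \quad n \ge 0,
\end{equation*}
by an induction on $k$ that compares consecutive rows: the recursion $F_{2k+2} = 3F_{2k} - F_{2k-2}$ (equivalently, the identity $F_{2k} = \phi F_{2k-1} - F_{2k-3}+\cdots$, or more simply $\lfloor (n+1)/\phi \rfloor$ arising from counting $0$'s and $1$'s in a prefix of the golden string of the appropriate length) propagates the floor term from one row to the next. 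Adding back the independent tail $j \in [0, F_{2N}-1]$, which ranges freely because indices below $N$ impose no constraint linking to index $N$, yields the stated union.

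The main obstacle will be the bookkeeping in that inductive step: unlike the Zeckendorf case, the coefficient $2$ means a ``row'' of the table is no longer a clean arithmetic-progression-like object, and one must carefully verify that the admissibility condition (a $2$ at index $N$ would need a $0$ strictly between it and any higher $2$) does not secretly exclude or duplicate values, and that the map $n \mapsto (n+2)F_{2k} + \lfloor (n+1)/\phi\rfloor F_{2k-1}$ is a strictly increasing bijection onto the intended sublist with no overlaps between the $k$-block and the $(k+1)$-block. I would handle this by establishing a gap lemma: the largest value in the $c_N=0$ sublist of row $k$ (with tail $j$ maximal) is less than $F_{2k}$, the smallest value in row $k+1$, using $F_{2N} \le F_{2k}$ and the standard Fibonacci inequality $\lfloor (n+1)/\phi\rfloor F_{2k-1} < F_{2k}$ for the relevant range of $n$, so the union is in fact a disjoint union and exhausts $B_{2N}$. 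Once the golden-string identities of Section~\ref{gold} are in hand, the remaining work is a finite case analysis on how a valid Chung-Graham string of length $k$ can end, together with the telescoping of the floor terms.
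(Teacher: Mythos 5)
Your proposal has a structural error that surfaces before the golden-string machinery even comes into play. You organize your table by the \emph{largest} index appearing in the Chung--Graham decomposition, but the $k$-indexed blocks in the theorem are infinite sets: for fixed $k$, the values $(n+2)F_{2k}+\lfloor (n+1)/\phi\rfloor F_{2k-1}$ are unbounded as $n\to\infty$, whereas there are only finitely many admissible coefficient strings with top index $2k$. So the $k$-block cannot be ``the $c_N=0$ part of row $k$'' of your table. The correct indexing is by the \emph{smallest} summand of index $\ge N$: after stripping the tail $j$ supported on indices $<N$, the $k$-block is exactly $A_{2k}=\{n: F_{2k}=\min\mathcal{CG}(n)\}$, the set of integers whose decomposition contains $F_{2k}$ but nothing smaller (check against the example $B_2$: the $k=2$ block $\{3,6,11,14,19,24,\dots\}$ is $F_4,\,2F_4,\,F_4+F_6,\,2F_4+F_6,\,F_4+2F_6,\,F_4+F_8,\dots$). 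This also falsifies your proposed gap lemma: the $k=2$ and $k=3$ blocks interleave in magnitude ($8$ and $16$ sit among $6,11,14,19$), so disjointness of the union cannot come from magnitude separation; it comes from the fact that distinct $k$ give distinct minimal indices $\ge N$, together with the observation that the tail $j$ concatenates freely with any element of $A_{2k}$ without violating the ``a $0$ between two $2$'s'' condition, since index $N$ itself carries a $0$.

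Even with the setup corrected, the heart of the matter --- that the increasing enumeration $q(1)<q(2)<\cdots$ of $A_{2k}$ satisfies $q(j+1)-q(j)\in\{F_{2k},F_{2k+1}\}$ according to the $(j-1)$\textsuperscript{th} letter of the golden string --- is asserted rather than proved in your sketch. The needed induction is not ``on $k$, comparing consecutive rows''; it is an induction on the position $j$ within the fixed list $A_{2k}$, and it requires knowing precisely how the elements of $A_{2k}$ with maximal summand $F_{2k+2\ell}$ split into those with coefficient $1$ (obtained by adding $F_{2k+2\ell}$ to \emph{all} $F_{2\ell+1}$ earlier elements) and those with coefficient $2$ (obtained by adding $2F_{2k+2\ell}$ to only an \emph{initial segment} of $F_{2\ell}$ earlier elements, as in Lemma \ref{l5}), that the coefficient-$2$ group lies entirely above the coefficient-$1$ group in value (Lemma \ref{l3}), and then the self-similarity properties of $\mathcal{S}$ (Propositions \ref{kp1} and \ref{kp2}) to match the two copied segments against the correct prefixes of the golden string. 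This bookkeeping is exactly where the coefficient $2$ makes the problem harder than Griffiths' Zeckendorf case, and it is absent from the proposal, so as written the argument does not go through.
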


For example, we list integers at most $30$ that belong to the following sets
\begin{align*}
    B_2&\ =\ \{3,6,8,11,14,16,19,21,24,27,29,\ldots\},\\
    B_4&\ =\ \{1,2,8,9,10, 16, 17, 18, 21, 22, 23, 29, 30\},\\
    B_6&\ =\ \{1,2,3,4,5,6,7,21, 22, 23, 24, 25, 26, 27, 28\}.
\end{align*}

To facilitate our writing, we introduce some notation that distinguish the two coefficients $1$ and $2$. Given $n\in \mathbb{N}$, let $\mathcal{CG}(n)$ denote the set of all Fibonacci numbers in the Chung-Graham decomposition of $n$. Let $\mathcal{CG}_1(n)$ be the set of all numbers in $\mathcal{CG}(n)$ that have coefficient $1$ in the Chung-Graham decomposition of $n$, and let $\mathcal{CG}_2(n) := \mathcal{CG}(n)\backslash \mathcal{CG}_1(n)$ be the set of all numbers in $\mathcal{CG}(n)$ that have coefficient $2$. For example,
\begin{align*}
    \mathcal{CG}(2F_{2} + F_4 + 2F_8 + F_{14})&\ =\ \{F_2, F_4, F_8, F_{14}\},\\
    \mathcal{CG}_1(2F_{2} + F_4 + 2F_8 + F_{14})&\ =\ \{F_4, F_{14}\}, \mbox{ and }\\
    \mathcal{CG}_2(2F_{2} + F_4 + 2F_8 + F_{14})&\ =\ \{F_2, F_8\}.
\end{align*}
For example, $\max \mathcal{CG}(n) = \max\mathcal{CG}_1(n) = F_{14}$ means that the largest Fibonacci number that appears in the Chung-Graham decomposition of $n$ is $F_{14}$ whose coefficient is $1$.  

The paper is structured as follows: in Section \ref{gold}, we define the golden string $\mathcal{S}$ and collect several properties that will be used in due course; Section \ref{tableprop} investigates the ordered list of integers having $F_{2k}$ as the smallest Fibonacci number in their Chung-Graham decomposition; finally, Section \ref{maintheoproof} gathers some auxiliary results before proving Theorem \ref{mtheo}.

\section{The golden string}\label{gold}
For two finite strings of symbols $X$ and $Y$, we write $X:Y$ to mean the concatenation of $X$ and $Y$. The golden string, denoted by $\mathcal{S}$, is an infinite string consisting of the letters $A$ and $B$, built recursively as follows: $S_1 = B$, $S_2 = BA$, and $S_k = S_{k-1}:S_{k-2}$ for $k\ge 3$. For example, 
\begin{align*}
    S_3 &\ =\ S_2:S_1\ =\ BAB,\\
    S_4 &\ =\ S_3:S_2\ =\ BABBA,\\
    S_5 &\ =\ S_4:S_3\ =\ BABBABAB.
\end{align*}
The first few letters of $\mathcal{S}$ are
$$BABBABABBABBABABBABAB\ldots.$$
We record several properties of $\mathcal{S}$ and $(S_n)_{n=1}^\infty$:
\begin{enumerate}
\item[a)] The length of $S_n$, denoted by $|S_n|$, is equal to $F_{n+1}$.
\item[b)] The substring $S_n$ gives the first $F_{n+1}$ letters of $\mathcal{S}$. 
\item[c)] For each $n\ge 2$, the substring consisting of the first $F_n$ letters of $\mathcal{S}$ is the same as the substring consisting of all the letters of $\mathcal{S}$ between the $F_{n+1}+1$\textsuperscript{th} and the $F_{n+2}$\textsuperscript{th} positions, inclusively.  This claim follows immediately from the following: 
\begin{itemize}
    \item by Property b), the first $F_n$ letters of $\mathcal{S}$ are given by $S_{n-1}$;
    \item also by Property b), the first $F_{n+2}$ letters of $\mathcal{S}$ are given by $$S_{n+1}\ :=\ S_n:S_{n-1};\mbox{ and }$$ 
    \item by Property a), $|S_n| = F_{n+1}$.
\end{itemize}
\item[d)] Griffiths \cite{Gr3} proved a neat relation between the Zeckendorf decomposition and the golden string: if $n = F_{c_1} + F_{c_2} + \cdots + F_{c_\ell}$ is the Zeckendorf decomposition of $n$, then $S_{c_k} : S_{c_{k-1}} : \cdots :S_{c_1}$ gives the first $n$ letters of the golden string. 
\item[e)] Let $N_B(n)$ denote the number of $B$'s in the first $n$ letters of $\mathcal{S}$. We recall Griffiths' \cite[Lemma 3.3]{Gr3}, which states that 
\begin{equation}\label{e2}N_B(n)\ =\ \left\lfloor \frac{n+1}{\phi}\right\rfloor,\end{equation} 
where $\phi = (1+\sqrt{5})/2$, the golden ratio. 
\item[f)] The following claims are easily proved by induction: for $m\ge 1$,
\begin{itemize}
    \item[(f1)] The $(F_{2m+1}-1)$\textsuperscript{th} letter of $\mathcal{S}$ is $B$. 
    \item[(f2)] The $F_{2m+1}$\textsuperscript{th} letter of $\mathcal{S}$ is $A$. 
    \item[(f3)] The $(2F_{2m+1}-1)$\textsuperscript{th} letter of $\mathcal{S}$ is $B$. 
    \item[(f4)] The $2F_{2m+1}$\textsuperscript{th} letter of $\mathcal{S}$ is $A$. 
\end{itemize}
Due to Proposition \ref{kp1} below, (f3) and (f4) follow immediately from (f1) and (f2). 
\end{enumerate}
\begin{prop}\label{kp1}
For $n\ge 4$, the substring of $\mathcal{S}$ consisting of the first $F_n$ letters of $\mathcal{S}$ is the same as the substring of $\mathcal{S}$ consisting of the next $F_n$ letters. 
\end{prop}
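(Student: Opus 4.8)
\emph{Proof idea.} By Property b) the first $F_n$ letters of $\mathcal{S}$ are precisely the block $S_{n-1}$, so the assertion is equivalent to saying that the doubled block $S_{n-1}:S_{n-1}$ is a prefix of $\mathcal{S}$. The plan is to locate this doubled block inside $S_{n+1}$ and then read off the positions $F_n+1,\dots,2F_n$ using only the defining recursion $S_k=S_{k-1}:S_{k-2}$ together with elementary Fibonacci identities.

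First I would apply the recursion twice, $S_{n+1}=S_n:S_{n-1}=(S_{n-1}:S_{n-2}):S_{n-1}=S_{n-1}:S_{n-2}:S_{n-1}$, which is legitimate for $n\ge 4$. By Property b) this string is a prefix of $\mathcal{S}$, and since $|S_{n+1}|=F_{n+2}=F_{n+1}+F_n\ge 2F_n$, the first $2F_n$ letters of $\mathcal{S}$ can be read directly off $S_{n-1}:S_{n-2}:S_{n-1}$. The opening block accounts for positions $1,\dots,F_n$. Since $|S_{n-2}|=F_{n-1}$ (Property a)) and $F_n+F_{n-1}=F_{n+1}$, the middle block $S_{n-2}$ occupies positions $F_n+1,\dots,F_{n+1}$; and because $2F_n-F_{n+1}=F_{n-2}\ge 1$, the remaining positions $F_{n+1}+1,\dots,2F_n$ are the first $F_{n-2}$ letters of the final block $S_{n-1}$.

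Then I would recognize that tail string: the first $F_{n-2}$ letters of $S_{n-1}$ are simply the first $F_{n-2}$ letters of $\mathcal{S}$ (since $S_{n-1}$ is itself a prefix), hence equal to $S_{n-3}$ by Properties a) and b). Thus positions $F_n+1,\dots,2F_n$ of $\mathcal{S}$ spell out $S_{n-2}:S_{n-3}$, which by one more use of the recursion equals $S_{n-1}$, i.e.\ the same as the first $F_n$ letters — giving the proposition. The whole argument is bookkeeping rather than anything deep; the one place I expect to have to be careful is checking that every invocation of the recursion and of Properties a), b) remains in range at the boundary $n=4$ (there one uses $S_{n-3}=S_1$, $S_{n-1}=S_3$, $S_{n+1}=S_5$), which is exactly what the hypothesis $n\ge4$ guarantees. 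Equivalently, one could instead feed the Zeckendorf sum $2F_n=F_{n+1}+F_{n-2}$ (valid precisely for $n\ge4$, since then $n-2\ge2$) into the Griffiths correspondence of Property d) to obtain the same conclusion.
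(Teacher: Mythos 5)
Your argument is correct and is essentially the paper's own proof: both expand $S_{n+1}=S_{n-1}:S_{n-2}:S_{n-1}$ and then use the fact that $S_{n-3}$ is a prefix of $S_{n-1}$ together with the re-association $S_{n-2}:S_{n-3}=S_{n-1}$ to exhibit $S_{n-1}:S_{n-1}$ as a prefix of $\mathcal{S}$. The paper phrases this by writing $S_{n-1}=S_{n-3}:L$ and regrouping, while you read off letter positions, but the content is identical.
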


\begin{proof}
Fix $n\ge 4$. Since for $k < \ell$, $S_k$ gives the initial letters of $S_\ell$, we can write
$S_{n-1} = S_{n-3} : L$ for some finite string $L$. We have
\begin{align*}
    S_{n+1}\ =\ S_n: S_{n-1}&\ =\ (S_{n-1}:S_{n-2}): (S_{n-3}: L)\\
    &\ =\ S_{n-1}:(S_{n-2}:S_{n-3}):L\\
    &\ =\ S_{n-1}:S_{n-1}:L.
\end{align*}
Since $S_{n+1}$ gives the initial letters of $\mathcal{S}$ and $|S_{n-1}| = F_n$, we are done. 
\end{proof}

We shall use the following notation. For a string $W$, we write $W-2$ to mean the string formed by deleting the last two letters of $W$. 

\begin{lem}\label{propS-2}
For $n\ge 1$, we have
$$S_n:S_{n+1}-2 \ =\ S_{n+1}:S_n-2.$$
\end{lem}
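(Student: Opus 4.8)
The plan is to prove the identity by induction on $n$, exploiting the defining recursion $S_{k}=S_{k-1}:S_{k-2}$ to strip off a common prefix and thereby reduce the statement for $n$ to the statement for $n-1$. The base case $n=1$ is immediate from a direct computation: $S_1:S_2 = B:BA = BBA$ and $S_2:S_1 = BA:B = BAB$, and deleting the last two letters of each string leaves $B$ in both cases.

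For the inductive step, fix $n\ge 2$ and assume the claim for $n-1$, that is, $S_{n-1}:S_n - 2 = S_n:S_{n-1} - 2$; since $S_{n+1}=S_n:S_{n-1}$, this hypothesis reads $S_{n-1}:S_n - 2 = S_{n+1} - 2$. Now I would rewrite both sides of the desired identity so that the block $S_n$ is exposed as a common leading factor. Using $S_{n+1}=S_n:S_{n-1}$ twice, we have $S_n:S_{n+1} = S_n:(S_n:S_{n-1})$ and $S_{n+1}:S_n = (S_n:S_{n-1}):S_n = S_n:(S_{n-1}:S_n)$. Because the trailing block in each expression has length at least $F_3 = 2$, deleting the final two letters never reaches into the leading copy of $S_n$, so the computation
\[ S_n:S_{n+1} - 2 \ =\ S_n:\bigl(S_{n+1}-2\bigr)\ =\ S_n:\bigl(S_{n-1}:S_n - 2\bigr)\ =\ S_n:S_{n-1}:S_n - 2\ =\ S_{n+1}:S_n - 2 \]
is valid, where the second equality is precisely the induction hypothesis. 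This closes the induction, and the lemma follows.

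The only delicate point — and it is delicate only in the bookkeeping sense — is the interaction of the truncation operator $W\mapsto W-2$ with concatenation: the rewriting $X:Y - 2 = X:(Y-2)$ holds exactly when $|Y|\ge 2$. I would therefore note explicitly at each use that the relevant trailing block ($S_{n+1}$, or $S_{n-1}:S_n$, or $S_n$ for $n\ge 2$) has length at least $F_3=2$ by Property a) of $\mathcal S$, so no truncation ever crosses a concatenation boundary that matters. Beyond this, the argument is a short formal manipulation of the recursion, so I do not anticipate any substantive obstacle. (One could alternatively deduce the statement from Proposition \ref{kp1} by observing that $S_{n+1}:S_n = S_{n+2}$ is an initial segment of $\mathcal S$ and comparing it with $S_n:S_{n+1}=S_n:S_n:S_{n-1}$, but the inductive route above is cleaner.)
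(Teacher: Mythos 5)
Your proof is correct and follows essentially the same route as the paper: induction on $n$, expanding via $S_{n+1}=S_n:S_{n-1}$ to expose a common prefix $S_n$ and applying the inductive hypothesis to the remaining tail. The only difference is cosmetic (you manipulate the left-hand side where the paper manipulates the right, and you make explicit the length condition under which truncation commutes with concatenation, which the paper leaves implicit).
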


\begin{proof}
    We prove by induction. The equality is true for $n = 1$. Inductive hypothesis: suppose that it is true for $n = \ell\ge 1$. We show that
    $$S_{\ell+1}:S_{\ell+2} - 2\ =\ S_{\ell+2}:S_{\ell+1} - 2.$$
    We have
    \begin{align*}
        S_{\ell+2}:S_{\ell+1} - 2&\ =\ (S_{\ell+1}: S_{\ell}): S_{\ell+1} - 2 \ =\ S_{\ell+1}:(S_{\ell}: S_{\ell+1} - 2)\\
        &\ =\ S_{\ell+1}:(S_{\ell+1}:S_{\ell}-2)\ =\ S_{\ell+1}:S_{\ell+2}-2. 
    \end{align*}
\end{proof}

\begin{prop}\label{kp2}
For $n\ge 5$, the substring of $\mathcal{S}$ consisting of the first $(F_{n-1}-2)$ letters is the same as the substring of $\mathcal{S}$ consisting of all the letters between the $(2F_n+1)$\textsuperscript{th} letter and the $(F_{n+2}-2)$\textsuperscript{th} letter, inclusively.    
\end{prop}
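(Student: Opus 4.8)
The plan is to carry out the whole argument inside the finite word $S_{n+1}$, which by Property b) agrees with the first $F_{n+2}$ letters of $\mathcal{S}$; since $2F_n+1 \le F_{n+2}-2 \le F_{n+2}$ for $n \ge 5$, both windows in the statement (the first $F_{n-1}-2$ letters, and the stretch from position $2F_n+1$ to position $F_{n+2}-2$) are subwords of $S_{n+1}$. I would begin with the routine length bookkeeping: using $F_{n+2} = F_{n+1}+F_n = 2F_n + F_{n-1}$, the window from position $2F_n+1$ to position $F_{n+2}-2$ has $F_{n+2}-2F_n-2 = F_{n-1}-2$ letters, which by Property a) is exactly the length of $S_{n-2}-2$. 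Since by Property b) the first $F_{n-1}-2$ letters of $\mathcal{S}$ are precisely $S_{n-2}-2$, it suffices to identify the second window with $S_{n-2}-2$.

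Next I would pin down the relevant tail of $S_{n+1}$. Writing $S_{n+1} = S_n:S_{n-1}$ and noting $|S_n| = F_{n+1} < 2F_n$, position $2F_n$ falls inside the $S_{n-1}$-block, at its internal position $2F_n - F_{n+1} = F_{n-2}$. Hence the letters of $\mathcal{S}$ in positions $2F_n+1,\dots,F_{n+2}$ form the suffix of $S_{n-1}$ obtained after deleting its first $F_{n-2}$ letters. Because $S_{n-3}$ gives the first $F_{n-2}$ letters of $\mathcal{S}$ (Property b), that deleted prefix is $S_{n-3}$; and expanding $S_{n-1} = S_{n-2}:S_{n-3} = (S_{n-3}:S_{n-4}):S_{n-3}$ shows the suffix in question is $S_{n-4}:S_{n-3}$. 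Deleting the last two letters, the target window equals $(S_{n-4}:S_{n-3})-2$.

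The argument then closes by invoking Lemma \ref{propS-2} with index $n-4 \ge 1$ — this is the only place the hypothesis $n \ge 5$ is used — to obtain $S_{n-4}:S_{n-3}-2 = S_{n-3}:S_{n-4}-2 = S_{n-2}-2$, which we already identified as the first $F_{n-1}-2$ letters of $\mathcal{S}$. I do not expect a serious obstacle here; the only delicate point is keeping the position arithmetic straight and observing that the leftover suffix $S_{n-4}:S_{n-3}$ is arranged exactly so that the reversal-up-to-the-last-two-letters identity of Lemma \ref{propS-2} applies verbatim. An essentially equivalent route would start from the factorization $S_{n+1} = S_{n-1}:S_{n-1}:L$ established in the proof of Proposition \ref{kp1}, where $L$ is the length-$F_{n-1}$ suffix; one then checks $L = S_{n-4}:S_{n-3}$ and finishes as above.
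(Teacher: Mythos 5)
Your proof is correct and follows essentially the same route as the paper's: both arguments reduce the window from position $2F_n+1$ to $F_{n+2}-2$ to the string $S_{n-4}:S_{n-3}-2$ (you via locating position $2F_n$ inside the $S_{n-1}$-block of $S_{n+1}=S_n:S_{n-1}$, the paper via the factorization $S_{n+1}=S_{n-1}:S_{n-1}:S_{n-4}:S_{n-3}$ that you mention as the alternative), and then both conclude with Lemma \ref{propS-2} to get $S_{n-2}-2$. The position arithmetic and the use of $n\ge 5$ to ensure $S_{n-4}$ is defined are handled correctly.
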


\begin{proof}
Pick $n\ge 5$. We have 
\begin{align*}
S_{n+1}\ =\ S_{n}:S_{n-1}&\ =\ S_{n-1}:S_{n-2}:S_{n-2}:S_{n-3}\\
&\ =\ S_{n-1}:S_{n-2}:S_{n-3}: S_{n-4} : S_{n-3}\\
&\ =\ S_{n-1}:S_{n-1}:S_{n-4}:S_{n-3}. 
\end{align*}
Observe that $|S_{n-1}| = F_n$ and $|S_{n-4}:S_{n-3}| = F_{n-3}+F_{n-2} = F_{n-1}$. 
Hence, the substring of $\mathcal{S}$ consisting of all the letters between the $(2F_n+1)$\textsuperscript{th} letter and the $(F_{n+2}-2)$\textsuperscript{th} letter, inclusively is
$$W \ :=\ S_{n-4}:S_{n-3}-2.$$
By Lemma \ref{propS-2}, 
$$W \ =\ S_{n-3}:S_{n-4}-2\ =\ S_{n-2} - 2,$$
which gives the first $F_{n-1}-2$ letters of $\mathcal{S}$. 
\end{proof}

\section{The ordered list of positive integers $n$ with $F_{2k} = \min\mathcal{CG}(n)$}\label{tableprop}

For $k\ge 1$, let $A_{2k} = \{n: F_{2k} = \min\mathcal{CG}(n)\}$. We form a table whose rows are numbers in $A_{2k}$ arranged in increasing order $q(1) < q(2) < q(3) < \cdots$.  Let us look at the first few rows of the table. 

\begin{center}
\begin{tabular}{  c c c c c c }
 $q(1)$&$F_{2k}$ & & & &  \\ 
 $q(2)$&$2F_{2k}$ & & & &  \\  
 $q(3)$&$F_{2k}$ & & $F_{2k+2}$ & & \\  
 $q(4)$&$2F_{2k}$ & &  $F_{2k+2}$ & & \\  
 $q(5)$&$F_{2k}$ & &  $2F_{2k+2}$ & & \\  
$q(6)$&$F_{2k}$ &  &  & & $F_{2k+4}$\\ 
 $q(7)$&$2F_{2k}$ &  & & & $F_{2k+4}$\\  
 $q(8)$&$F_{2k}$ & & $F_{2k+2}$ & & $F_{2k+4}$\\  
 $q(9)$&$2F_{2k}$ & & $F_{2k+2}$ & & $F_{2k+4}$\\  
 $q(10)$&$F_{2k}$ & & $2F_{2k+2}$ & & $F_{2k+4}$\\  
  $q(11)$&$F_{2k}$ &  & & &  $2F_{2k+4}$\\ 
 $q(12)$&$2F_{2k}$ &  & & & $2F_{2k+4}$\\  
 $q(13)$&$F_{2k}$ & &  $F_{2k+2}$ & & $2F_{2k+4}$\\  
 \vdots
\end{tabular}

\centering{Table 1. The numbers in $A_{2k}$ in increasing order}
\end{center}

This section shows a way to construct new rows of Table 1 recursively.  
First, we prove that a number with a larger maximum Fibonacci number in its Chung-Graham decomposition belongs 
to a lower row. Consequently, the numbers $n$ with the same $\max \mathcal{CG}(n)$ form consecutive rows in the table. 

\begin{lem}\label{l1}
For $m\ge 0$, the largest positive integer $n\in A_{2k}$ with $\max\mathcal{CG}(n) = F_{2k+2m}$, denoted by $N(m)$, is 
$$F_{2k} + F_{2k+2} + \cdots + F_{2k+2m-2} + 2F_{2k+2m}.$$
\end{lem}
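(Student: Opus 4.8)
The plan is to establish the claim by induction on $m$, exploiting the recursive structure of the Chung-Graham decomposition together with the constraint on the coefficient $2$. For the base case $m = 0$, I would argue that if $n \in A_{2k}$ and $\max\mathcal{CG}(n) = F_{2k}$, then the Chung-Graham decomposition of $n$ consists only of the term $c_k F_{2k}$ with $c_k \in \{1,2\}$, so the largest such $n$ is $2F_{2k}$, matching the formula $N(0) = 2F_{2k}$. For the inductive step, suppose the claim holds for $m - 1$. Given $n \in A_{2k}$ with $\max\mathcal{CG}(n) = F_{2k+2m}$, write $n = c\,F_{2k+2m} + n'$ where $c \in \{1,2\}$ is the coefficient of $F_{2k+2m}$ and $n'$ collects the lower-indexed terms; then $n' \in A_{2k}$ (or $n' = 0$, which cannot happen here since we need $F_{2k}$ to appear) and $\max\mathcal{CG}(n') \le F_{2k+2m-2}$.

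The key observation driving the bound is the coefficient-$2$ constraint: if $c = 2$, then by Theorem \ref{CGlemma} the next-lower coefficient (that of $F_{2k+2m-2}$) cannot also be a nonzero-maximal obstruction — more precisely, there must be a coefficient $0$ between any two coefficients $2$, so in particular if $c = 2$ we must track how large $n'$ can be subject to its own decomposition not clashing. I would split into cases on $c$. If $c = 1$, then $n'$ can be as large as the maximum element of $A_{2k}$ with $\max\mathcal{CG} \le F_{2k+2m-2}$, i.e.\ $N(m-1) = F_{2k} + \cdots + F_{2k+2m-4} + 2F_{2k+2m-2}$, giving $n \le N(m-1) + F_{2k+2m}$. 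If $c = 2$, then the coefficient of $F_{2k+2m-2}$ in $n'$ is forced to be $0$ at the "top" — but one must be careful, since the relevant constraint is about the positions of the $2$'s, and $n'$ might still have a $2$ further down; I would instead bound $n' \le N(m-2) + F_{2k+2m-2}$ when $c = 2$ (the coefficient of $F_{2k+2m-2}$ being $1$, not $2$), and compare the two resulting totals. Taking the maximum over both cases and verifying via the Fibonacci recurrence $F_{2k+2m} + F_{2k+2m-2} + \text{(telescoping sum)}$ that the $c = 1$ case wins and yields exactly $F_{2k} + F_{2k+2} + \cdots + F_{2k+2m-2} + 2F_{2k+2m}$ completes the step. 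Finally I would check that this candidate value is genuinely achievable — that the string of coefficients $(2, 1, 1, \ldots, 1)$ reading from $F_{2k+2m}$ down to $F_{2k}$ satisfies the Chung-Graham constraint (it does, since there is only one coefficient $2$) and has $\min\mathcal{CG} = F_{2k}$.

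The main obstacle I anticipate is the case analysis on the coefficient $c$ of $F_{2k+2m}$ and the bookkeeping of the constraint "between two $2$'s there is a $0$" as it propagates down the decomposition of $n'$. Specifically, showing that $c = 2$ cannot produce a larger $n$ than $c = 1$ requires comparing $N(m-1) + F_{2k+2m}$ against $2F_{2k+2m} + (\text{best } n' \text{ with coefficient of } F_{2k+2m-2} \le 1)$, and the latter bound is delicate because "coefficient $\le 1$ at the top" still allows $2$'s lower down. I expect the cleanest route is to prove a slightly stronger statement by induction — tracking not just $N(m)$ but also the largest element of $A_{2k}$ with $\max\mathcal{CG} = F_{2k+2m}$ whose coefficient at $F_{2k+2m}$ is exactly $1$ (call it $N_1(m)$) — so that the recursion closes: $N(m) = \max\{F_{2k+2m} + N(m-1),\ 2F_{2k+2m} + N_1(m-1)\}$ and $N_1(m) = F_{2k+2m} + N(m-1)$, after which the formula follows by a routine Fibonacci computation.
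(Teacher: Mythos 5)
Your overall strategy (induction on $m$ with a case split on the top coefficient $c$) could in principle be made to work, but as written the proposal contains two genuine errors. First, the comparison between the two branches is backwards: the claimed maximum $F_{2k}+F_{2k+2}+\cdots+F_{2k+2m-2}+2F_{2k+2m}$ has coefficient $2$ at the top, so it can only arise from the $c=2$ branch, and indeed the $c=2$ branch wins: the difference between $2F_{2k+2m}+\sum_{i=0}^{m-1}F_{2k+2i}$ and the $c=1$ candidate $F_{2k+2m}+N(m-1)$ is $F_{2k+2m}-F_{2k+2m-2}=F_{2k+2m-1}>0$. Second, and more seriously, the recursion you propose to close the induction, $N(m)=\max\{F_{2k+2m}+N(m-1),\ 2F_{2k+2m}+N_1(m-1)\}$, is not valid: $N_1(m-1)=F_{2k+2m-2}+N(m-2)$ has coefficient $2$ at position $m-2$ and coefficient $1$ at position $m-1$, so prepending a $2$ at position $m$ creates two coefficients $2$ (at positions $m-2$ and $m$) with no coefficient $0$ between them, violating the Chung--Graham condition. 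The correct auxiliary quantity for the $c=2$ branch is the largest $n'$ with $\max\mathcal{CG}(n')\le F_{2k+2m-2}$ to which one may legally append $2F_{2k+2m}$; a short argument shows this is the all-ones sum $\sum_{i=0}^{m-1}F_{2k+2i}$ (this is essentially the content of the paper's Lemma \ref{l5}), and with that replacement your induction would close and give the stated formula.

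For comparison, the paper avoids the case analysis entirely with a direct exchange argument: writing $N(m)=\sum_{i=0}^m c_iF_{2k+2i}$, if some $c_j=0$ with $1\le j\le m-1$ then either raising $c_j$ to $1$ is legal (giving a larger element of $A_{2k}$ with the same top summand, a contradiction), or it is blocked by coefficients $c_{j'}=c_{j''}=2$ with $j'<j<j''$, in which case lowering $c_{j'}$ to $1$ and raising $c_j$ to $1$ is legal and strictly increases the value since $F_{2k+2j'}+F_{2k+2j}>2F_{2k+2j'}$. Hence all $c_i\ge 1$, so at most one coefficient equals $2$ and maximality forces it to sit at the top. You should either adopt that perturbation argument or repair your recursion as indicated above.
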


\begin{proof}
 Write $N(m)$ as $\sum_{i=0}^m c_i F_{2k+2i}$, where the $c_i$'s are in $\{0, 1, 2\}$ and satisfy the Chung-Graham condition. Suppose, for a contradiction, that $c_j = 0$ for some $1 \le j \le m-1$. If changing $c_j$ to $1$ does not violate the Chung-Graham condition, then $N(m) + F_{2k+2j}$ is greater than $N(m)$, while $\max\mathcal{CG}(N(m)+F_{2k+2j}) = F_{2k+2m}$. This contradicts the maximality of $N(m)$. Hence, changing $c_j$ to $1$ violates the Chung-Graham condition; that is, there are $1\le j' < j < j''\le m$ such that $c_{j'} = c_{j''} = 2$. Here we choose the largest $j'$ and the smallest $j''$ that satisfy these conditions. We change both $c_{j'}$ and $c_{j}$ to $1$. Then the new coefficients still satisfy the Chung-Graham condition, but since 
 $$F_{2k+2j'} + F_{2k+2j}\ >\ 2F_{2k+2j'} + 0F_{2k+2j},$$
 the new number is greater than $N(m)$. This again contradicts the maximality of $N(m)$. Therefore, $c_i\ge 1$ for all $i$, which clearly implies that 
 $$N(m) \ =\ F_{2k} + F_{2k+2} + \cdots + F_{2k+2m-2} + 2F_{2k+2m},$$
 as desired. 
\end{proof}

\begin{cor}\label{c1}
Given $n, m\in A_{2k}$, if $\max\mathcal{CG}(n) < \max \mathcal{CG}(m)$, then 
$n < m$. 
\end{cor}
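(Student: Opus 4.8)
The plan is to obtain Corollary \ref{c1} as an essentially immediate consequence of Lemma \ref{l1} together with a classical Fibonacci identity. Write $\max\mathcal{CG}(n) = F_{2k+2a}$ and $\max\mathcal{CG}(m) = F_{2k+2b}$; the hypothesis $\max\mathcal{CG}(n) < \max\mathcal{CG}(m)$ forces $a < b$, hence $b \ge a+1$. Since $n \in A_{2k}$ has $F_{2k+2a}$ as the largest Fibonacci number in its Chung-Graham decomposition, Lemma \ref{l1} yields $n \le N(a) = F_{2k} + F_{2k+2} + \cdots + F_{2k+2a-2} + 2F_{2k+2a}$. On the other hand, $F_{2k+2b}$ occurs with a positive coefficient in the decomposition of $m$, so trivially $m \ge F_{2k+2b} \ge F_{2k+2a+2}$. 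It therefore suffices to prove the numerical estimate $N(a) < F_{2k+2a+2}$, and then chain $n \le N(a) < F_{2k+2a+2} \le F_{2k+2b} \le m$.

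For the numerical estimate I would use the standard identity $\sum_{i=1}^{j} F_{2i} = F_{2j+1} - 1$ (an easy induction). Applying it to the telescoped sum gives $F_{2k} + F_{2k+2} + \cdots + F_{2k+2a-2} = F_{2k+2a-1} - F_{2k-1}$, which is the empty sum equal to $0$ when $a = 0$, so the formula remains correct in that case. Hence $N(a) = 2F_{2k+2a} + F_{2k+2a-1} - F_{2k-1}$. Since $F_{2k+2a+2} = F_{2k+2a+1} + F_{2k+2a} = 2F_{2k+2a} + F_{2k+2a-1}$ and $F_{2k-1} \ge F_1 = 1 > 0$, we conclude $N(a) = F_{2k+2a+2} - F_{2k-1} < F_{2k+2a+2}$, completing the argument.

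There is no real obstacle in this proof; it is a short deduction from Lemma \ref{l1}. The only point requiring a moment of care is the degenerate case $a = 0$ (so that $\max\mathcal{CG}(n) = F_{2k}$ and the sum defining $N(0)$ is empty, giving $N(0) = 2F_{2k} < F_{2k+2}$), but, as noted, the general formula covers it.
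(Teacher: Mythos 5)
Your proof is correct and follows essentially the same route as the paper: bound $n$ above by $N(a)$ via Lemma \ref{l1}, telescope the sum to show $N(a) = F_{2k+2a+2} - F_{2k-1}$, and compare with $m \ge F_{2k+2b} \ge F_{2k+2a+2}$. The handling of the degenerate case $a=0$ is a nice touch but, as you note, already covered by the general formula.
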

\begin{proof}
Suppose that $\max\mathcal{CG}(n) = F_{2k+2j_1}$ and $\max\mathcal{CG}(m) = F_{2k+2j_2}$ with $j_1 < j_2$. Then $m\ge F_{2k} + F_{2k+2j_2}$. By Lemma \ref{l1}, 
\begin{align*}
n&\ \le\ F_{2k} + F_{2k+2} + \cdots + F_{2k+2j_1-2} + 2F_{2k+2j_1}\\ 
&\ =\ (F_{2k-1} + F_{2k} + F_{2k+2} + \cdots + F_{2k+2j_1-2}) + 2F_{2k+2j_1} - F_{2k-1}\\
&\ =\ F_{2k+2j_1 - 1} + 2F_{2k+2j_1} - F_{2k-1}\ =\ F_{2k+2j_1 + 2} - F_{2k-1} \ <\ F_{2k+2j_2}\ <\ m.
\end{align*}
\end{proof}

Thanks to Corollary \ref{c1}, we know that the numbers having $F_{2k+2\ell}$ ($\ell\ge 1$) (either coefficient $1$ or $2$) as the maximum Fibonacci number in its decomposition form consecutive rows in Table 1. The next result tells us their location. 

\begin{lem}\label{l2} For $\ell\ge 1$, the numbers in $\{n\in A_{2k}: \max\mathcal{CG}(n) = F_{2k + 2\ell}\}$
lie between the  $(F_{2\ell+1}+1)$\textsuperscript{th} and the $F_{2\ell+3}$\textsuperscript{th} rows, inclusively.
\end{lem}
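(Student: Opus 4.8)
The plan is to count, for each $m\ge 0$, how many integers $n\in A_{2k}$ satisfy $\max\mathcal{CG}(n)=F_{2k+2m}$, and then sum these counts to locate the block of rows corresponding to a fixed value of $\ell$. Denote by $T(m)$ the number of integers $n\in A_{2k}$ with $\max\mathcal{CG}(n)=F_{2k+2m}$. By Corollary \ref{c1}, these integers occupy consecutive rows, and the rows for $\max\mathcal{CG}(n)=F_{2k+2\ell}$ come immediately after all rows for $\max\mathcal{CG}(n)=F_{2k}, F_{2k+2},\dots, F_{2k+2\ell-2}$. So the block for index $\ell$ occupies rows $\bigl(1+\sum_{m=0}^{\ell-1}T(m)\bigr)$ through $\sum_{m=0}^{\ell}T(m)$. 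The lemma will follow once we show
\[
\sum_{m=0}^{\ell-1}T(m)\ =\ F_{2\ell+1}\qquad\text{and}\qquad \sum_{m=0}^{\ell}T(m)\ =\ F_{2\ell+3}.
\]

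The key step is therefore to compute $T(m)$. An integer $n\in A_{2k}$ with top summand $F_{2k+2m}$ is determined by a choice of coefficients $c_0,\dots,c_m\in\{0,1,2\}$ with $c_0\ge 1$ (since $F_{2k}=\min\mathcal{CG}(n)$ forces $c_0\neq 0$), $c_m\ge 1$ (since $F_{2k+2m}=\max\mathcal{CG}(n)$ forces $c_m\neq 0$), and satisfying the Chung–Graham constraint that between any two coefficients equal to $2$ there is a coefficient equal to $0$. By Theorem \ref{CGlemma} (uniqueness), distinct admissible coefficient vectors give distinct integers, so $T(m)$ equals the number of such vectors. I would set up a transfer-matrix / linear recurrence for the count of admissible strings over $\{0,1,2\}$ with the "no two $2$'s without an intervening $0$" condition, tracking as state whether the most recent nonzero symbol seen so far is a $2$ (so that another $2$ is currently forbidden) or not. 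Imposing the boundary conditions $c_0\ge 1$ and $c_m\ge 1$ then pins down $T(m)$; I expect to get $T(0)=2$ (namely $F_{2k}$ and $2F_{2k}$, matching rows $q(1),q(2)$ in Table 1), $T(1)=3$ (rows $q(3),q(4),q(5)$), $T(2)=5$ (rows $q(6)$ through $q(10)$), i.e.\ $T(m)=F_{2m+2}$ for all $m\ge 0$.

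Granting $T(m)=F_{2m+2}$, the two displayed sums are routine Fibonacci identities: $\sum_{m=0}^{\ell-1}F_{2m+2}=F_1+\sum_{m=1}^{\ell}F_{2m}-1$, and using the standard identity $\sum_{j=1}^{r}F_{2j}=F_{2r+1}-1$ one gets $\sum_{m=0}^{\ell-1}F_{2m+2}=F_{2\ell+1}-1+1=\cdots$; I will verify the exact telescoping so that the partial sums come out to $F_{2\ell+1}$ and $F_{2\ell+3}$ precisely (the small cases $\ell=1,2$ against Table 1 serve as a sanity check: the block for $\ell=1$ should be rows $3$ through $5$, and $F_3=3$, $F_5=5$; the block for $\ell=2$ should be rows $6$ through $13$, and $F_5+1=6$, $F_7=13$). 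The main obstacle is the combinatorial count $T(m)$: one must handle the interaction of the two endpoint conditions $c_0\ge 1$, $c_m\ge 1$ with the Chung–Graham restriction carefully, most cleanly via a short induction on $m$ (splitting on the value of $c_m$, or on the position of the last $0$) rather than by an ad hoc argument. Everything after that is bookkeeping with Fibonacci sums.
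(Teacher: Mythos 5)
Your high-level strategy is the same as the paper's: let $T(m)$ be the number of $n\in A_{2k}$ with $\max\mathcal{CG}(n)=F_{2k+2m}$, use Corollary \ref{c1} to see these form consecutive blocks, and show the cumulative sums are $F_{2\ell+1}$ and $F_{2\ell+3}$. But the central step --- actually computing $T(m)$ --- is not carried out, and the values you predict are wrong, which indicates the transfer-matrix argument as sketched would not close the gap. Concretely, $T(2)=8$, not $5$: Table 1 itself shows that rows $q(6)$ through $q(13)$ all have $F_{2k+4}$ as the top summand; your ``rows $q(6)$ through $q(10)$'' omits $q(11),q(12),q(13)$, i.e.\ precisely the integers in which $F_{2k+4}$ carries coefficient $2$. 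Your claimed closed form $T(m)=F_{2m+2}$ is also inconsistent with your own $T(0)=2$ (since $F_2=1$), and if one uses $T(m)=F_{2m+2}$ uniformly the partial sums come out to $F_{2\ell+1}-1$ and $F_{2\ell+3}-2$ rather than $F_{2\ell+1}$ and $F_{2\ell+3}$. The correct accounting is $T(0)=2$ and $T(m)=F_{2m+2}$ for $m\ge 1$, and then
\[
\sum_{m=0}^{\ell-1}T(m)\ =\ 2+\sum_{m=1}^{\ell-1}F_{2m+2}\ =\ F_{2\ell+1}
\]
does hold.

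The part you have deferred is exactly the hard part, and it is where the paper's induction does its work. Splitting on whether the top coefficient is $1$ or $2$: the coefficient-$1$ case contributes one new integer for each earlier row (count $F_{2m+1}$ when the top index is $2k+2m$), but the coefficient-$2$ case is subtle because appending a $2$ to an admissible string is forbidden whenever the string already contains a $2$ with no $0$ after it. The paper handles this by classifying the shorter strings according to the position of their last zero coefficient (the forms $f_1,\dots,f_{m+1}$ in the proof of Lemma \ref{l2}), and summing $\sum_{i=1}^{m-1}F_{2m-2i+3}+3=F_{2m+2}$. A transfer-matrix count could in principle reproduce this, but you would need to track ``is there a $2$ after the last $0$'' as the state and impose the endpoint conditions $c_0\ge 1$, $c_m\ge 1$ correctly; since your predicted outputs disagree with Table 1 at $m=0$ and $m=2$, the proposal as written does not yet constitute a proof.
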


\begin{proof} We prove by induction. Base case: it is easy to verify that the lemma holds for $\ell=1$. Inductive hypothesis: suppose the lemma is true for $\ell\le m$ for some $m\geq1$. We prove that the lemma holds for $\ell=m+1$; that is, the numbers in $\{n\in A_{2k}: \max\mathcal{CG}(n) = F_{2k + 2m+2}\}$ lie between the  $(F_{2m+3}+1)$\textsuperscript{th} to the $F_{2m+5}$\textsuperscript{th} rows, inclusively. By the inductive hypothesis,
\begin{align*}
&|\{n\in A_{2k}: \max\mathcal{CG}(n) \le F_{2k+2m}\}|\\
\ =\ &\sum_{\ell = 0}^m |\{n\in A_{2k}: \max\mathcal{CG}(n) = F_{2k+2\ell}\}|\\
\ =\ &2 + \sum_{\ell = 1}^m |\{n\in A_{2k}: \max\mathcal{CG}(n) = F_{2k+2\ell}\}|\\
\ =\ &F_3 + \sum_{\ell=1}^{m} (F_{2\ell+3} - F_{2\ell+1})\ =\ F_3 + \sum_{\ell = 1}^m F_{2\ell + 2}\ =\ F_{2m+3}. 
\end{align*}
Therefore, the numbers with the largest summand $F_{2k+2m+2}$ start at the $(F_{2m+3}+1)$\textsuperscript{th} row. 

It suffices to prove that 
$$|\{n\in A_{2k}\,:\, \max\mathcal{CG}(n) = F_{2k + 2m + 2}\}| \ =\ F_{2m+4}.$$
Note that 
\begin{align*}&\{n\in A_{2k}: \max\mathcal{CG}(n) = F_{2k + 2m+2}\}\\
\ =\ &\{n\in A_{2k}:  \max\mathcal{CG}(n) = \max\mathcal{CG}_1(n) = F_{2k+2m+2}\}\\
&\cup \{n\in A_{2k}: \max\mathcal{CG}(n) = \max\mathcal{CG}_2(n) = F_{2k+2m+2}\}.\end{align*}
Since all numbers in $\{n\in A_{2k}: \max\mathcal{CG}(n) = \max\mathcal{CG}_1(n) = F_{2k+2m+2}\}$ are created by adding $F_{2k+2m+2}$ to the numbers in $\{n\in A_{2k}: \max\mathcal{CG}(n) \le F_{2k+2m}\}$, we know that
$$|\{n\in A_{2k}: \max\mathcal{CG}(n) = \max\mathcal{CG}_1(n) = F_{2k+2m+2}\}|\ =\ F_{2m+3}.$$

It remains to show
$$|\{n\in A_{2k}: \max\mathcal{CG}(n) = \max\mathcal{CG}_2(n) = F_{2k+2m+2}\}|\ =\ F_{2m+2}.$$
All numbers in $\{n\in A_{2k}: \max\mathcal{CG}(n) = \max\mathcal{CG}_2(n) = F_{2k+2m+2}\}$ are formed by adding $2F_{2k+2m+2}$ to a subset of $\{n\in A_{2k}: \max\mathcal{CG}(n) \le F_{2k+2m}\}$. Pick $s\in \{n\in A_{2k}: \max\mathcal{CG}(n) \le F_{2k+2m}\}$ such that $s+2F_{2k+2m+2} \in \{n\in A_{2k}: \max\mathcal{CG}(n) = \max\mathcal{CG}_2(n) = F_{2k+2m+2}\}$. Equivalently, if the Chung-Graham decomposition of $s$ is $\sum_{i=0}^m c_i F_{2k+2i}$, then $(c_0, c_1, \ldots, c_m)$ must have one of the following forms, based on the largest $i$ (if any) with $c_i = 0$:
\begin{align*}
f_1&\quad  (c_0, c_1, \ldots, c_{m-3}, c_{m-2}, c_{m-1}, 0)\\
f_2&\quad (c_0, c_1, \ldots, c_{m-3}, c_{m-2}, 0, 1)\\
f_3&\quad (c_0, c_1, \ldots, c_{m-3}, 0, 1, 1)\\
\quad\quad\vdots\\
f_m&\quad (c_0, 0, \ldots, 1, 1, 1, 1)\\
f_{m+1}&\quad (1, 1, \ldots, 1, 1, 1, 1).
\end{align*}
For $1\le i\le m-1$, the number of $s$ having the form $f_i$ is equal to $|\{n\in A_{2k}: \max\mathcal{CG}(n) \le F_{2k+2m-2i}\}|$, which, by the inductive hypothesis, is 
$$\sum_{j=0}^{m-i} |\{n\in A_{2k}: \max\mathcal{CG}(n) = F_{2k+2j}\}|\ =\ 2 + \sum_{j=1}^{m-i} F_{2j+2}\ =\ F_{2m-2i+3}.$$
The number of $s$ having the form $f_m$ and $f_{m+1}$ is $2$ and $1$, respectively. 
Therefore, 
\begin{align*}
    |\{n\in A_{2k}: \max \mathcal{CG}(n) = \max\mathcal{CG}_2(n) = F_{2k+2m+2}\}|&\ =\ \sum_{i=1}^{m-1} F_{2m-2i+3} + 2 + 1\\
    &\ =\ F_4 + F_5 + \cdots + F_{2m+1}\\
    &\ =\  F_{2m+2}.
\end{align*}
This completes our proof. 
\end{proof}

The proof of Lemma \ref{l2} also reveals the counts of numbers having $F_{2k+2\ell}$ and $2F_{2k+2\ell}$ as the maximum summand in their decomposition. 

\begin{cor}\label{c2}
For $\ell\ge 1$, we have
\begin{align*}
&|\{n\in A_{2k}: \max\mathcal{CG}(n) = \max\mathcal{CG}_1(n) = F_{2k + 2\ell}\}| \ =\ F_{2\ell+1},\mbox{ and }\\
&|\{n\in A_{2k}: \max\mathcal{CG}(n) = \max\mathcal{CG}_2(n) = F_{2k + 2\ell}\}| \ =\ F_{2\ell}. 
\end{align*} 
\end{cor}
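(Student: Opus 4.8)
The plan is to read both equalities off the proof of Lemma \ref{l2}, which already computes the two quantities along the way; the corollary merely isolates them and re-indexes. So I would not re-run the induction, just extract the relevant pieces.

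First I would dispose of the case $\ell = 1$ by inspecting Table 1. The members of $A_{2k}$ with $\max\mathcal{CG}(n) = F_{2k+2}$ are exactly $q(3), q(4), q(5)$, and among these the coefficient of $F_{2k+2}$ equals $1$ for $q(3)$ and $q(4)$ and equals $2$ for $q(5)$. Hence the first count is $2 = F_3 = F_{2\cdot 1+1}$ and the second is $1 = F_2 = F_{2\cdot 1}$, as claimed.

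Next, for $\ell \ge 2$ I would write $\ell = m+1$ with $m\ge 1$ and use the partition of $\{n\in A_{2k}: \max\mathcal{CG}(n) = F_{2k+2m+2}\}$ into the subset on which $F_{2k+2m+2}$ has coefficient $1$ (that is, $\max\mathcal{CG}_1(n) = F_{2k+2m+2}$) and the subset on which it has coefficient $2$ (that is, $\max\mathcal{CG}_2(n) = F_{2k+2m+2}$). This partition already appears in the proof of Lemma \ref{l2}, and it is a genuine partition because $F_{2k+2m+2}\in\mathcal{CG}(n)$ forces its coefficient into $\{1,2\}$. The proof of Lemma \ref{l2} establishes that the first subset has $F_{2m+3}$ elements --- each such $n$ is $F_{2k+2m+2}$ added to a member of $\{n\in A_{2k}: \max\mathcal{CG}(n)\le F_{2k+2m}\}$, a set of size $F_{2m+3}$ --- and that the second subset has $F_{2m+2}$ elements, via the classification into the forms $f_1,\dots,f_{m+1}$ and the evaluation $\sum_{i=1}^{m-1}F_{2m-2i+3} + 2 + 1 = F_{2m+2}$. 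Since $F_{2m+3} = F_{2(m+1)+1} = F_{2\ell+1}$ and $F_{2m+2} = F_{2(m+1)} = F_{2\ell}$, both identities follow.

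I do not expect any genuine obstacle here: the argument is pure bookkeeping on top of Lemma \ref{l2}. The only point needing a little care is that the two sub-counts in that proof are stated for the index $F_{2k+2m+2}$ with $m\ge 1$, so the case $\ell=1$ is not subsumed and has to be handled separately --- which, as noted, is immediate from Table 1.
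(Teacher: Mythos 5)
Your proposal is correct and matches the paper's intent exactly: the paper offers no separate argument for Corollary \ref{c2}, merely noting that the two counts are read off from the proof of Lemma \ref{l2}, which is precisely what you do (including the correct re-indexing $\ell = m+1$ and the separate check of $\ell=1$ from Table 1). No issues.
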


We now show that the numbers in $\{n\in A_{2k}: \max\mathcal{CG}(n) = \max\mathcal{CG}_2(n) = F_{2k + 2\ell}\}$ must lie below (are larger than) the numbers in $\{n\in A_{2k}: \max\mathcal{CG}(n) = \max\mathcal{CG}_1(n) = F_{2k + 2\ell}\}$.

\begin{lem}\label{l3}For $\ell\ge 0$, 
\begin{align*}&\min \{n\in A_{2k}: \max\mathcal{CG}(n) = \max\mathcal{CG}_2(n) = F_{2k + 2\ell}\}\\
\ >\ &\max\{n\in A_{2k}: \max\mathcal{CG}(n) = \max\mathcal{CG}_1(n) = F_{2k + 2\ell}\}.\end{align*}
\end{lem}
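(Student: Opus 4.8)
The plan is to pin down both extremal numbers in closed form and then subtract. The case $\ell=0$ is immediate: here the two sets are $\{F_{2k}\}$ and $\{2F_{2k}\}$ (the only elements of $A_{2k}$ with $\max\mathcal{CG}=F_{2k}$ are $F_{2k}$ and $2F_{2k}$), so the inequality reads $2F_{2k}>F_{2k}$. Assume now $\ell\ge 1$.

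For the right-hand side, I would observe that appending the coefficient $1$ at position $2k+2\ell$ to a Chung-Graham decomposition never creates a pair of adjacent $2$'s, so $s\mapsto s+F_{2k+2\ell}$ is a strictly increasing bijection from $\{s\in A_{2k}:\max\mathcal{CG}(s)\le F_{2k+2\ell-2}\}$ onto $\{n\in A_{2k}:\max\mathcal{CG}(n)=\max\mathcal{CG}_1(n)=F_{2k+2\ell}\}$. Hence the maximum of the target set is $F_{2k+2\ell}$ plus the largest element of $A_{2k}$ with $\max\mathcal{CG}$ at most $F_{2k+2\ell-2}$; by Corollary \ref{c1} that largest element has $\max\mathcal{CG}$ equal to $F_{2k+2\ell-2}$, and by Lemma \ref{l1} it equals $N(\ell-1)=F_{2k}+F_{2k+2}+\cdots+F_{2k+2\ell-4}+2F_{2k+2\ell-2}$. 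The same telescoping used in the proof of Corollary \ref{c1} gives $N(\ell-1)=F_{2k+2\ell}-F_{2k-1}$, so the right-hand side of the lemma equals $2F_{2k+2\ell}-F_{2k-1}$.

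For the left-hand side, every element of $\{n\in A_{2k}:\max\mathcal{CG}(n)=\max\mathcal{CG}_2(n)=F_{2k+2\ell}\}$ is of the form $s+2F_{2k+2\ell}$ with $s\in A_{2k}$ and $\max\mathcal{CG}(s)\le F_{2k+2\ell-2}$, hence is at least $F_{2k}+2F_{2k+2\ell}$ since $F_{2k}=\min A_{2k}$; and this bound is attained, because $F_{2k}+2F_{2k+2\ell}$ has a single coefficient $2$, so it trivially obeys the Chung-Graham condition and is therefore, by the uniqueness in Theorem \ref{CGlemma}, the Chung-Graham decomposition of the number $F_{2k}+2F_{2k+2\ell}$, which then lies in the set. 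Subtracting, the difference between the two sides of the lemma is $(F_{2k}+2F_{2k+2\ell})-(2F_{2k+2\ell}-F_{2k-1})=F_{2k}+F_{2k-1}=F_{2k+1}>0$, giving the claimed strict inequality.

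I do not expect a serious obstacle here; the only subtlety to handle carefully is the asymmetry between the two sets. Unlike $s\mapsto s+F_{2k+2\ell}$, the map $s\mapsto s+2F_{2k+2\ell}$ is \emph{not} a bijection onto $\{s\in A_{2k}:\max\mathcal{CG}(s)\le F_{2k+2\ell-2}\}$ — its set of admissible $s$ is the proper subset of that kind described in the proof of Lemma \ref{l2} — so one cannot read the minimum off Lemma \ref{l1}; instead one uses that the global minimizer $F_{2k}$ of $A_{2k}$ is itself admissible (its coefficient tuple is $(1,0,\dots,0)$, so appending a $2$ is fine). Everything else is a short Fibonacci computation.
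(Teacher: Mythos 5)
Your proposal is correct and follows essentially the same route as the paper: identify the maximum of the coefficient-$1$ set as $F_{2k+2\ell}$ plus $N(\ell-1)$ from Lemma \ref{l1}, identify the minimum of the coefficient-$2$ set as $F_{2k}+2F_{2k+2\ell}$, and compare. Your closed form $N(\ell-1)=F_{2k+2\ell}-F_{2k-1}$ is the same telescoping the paper performs (it phrases the final step as the right-hand side summing to $F_{2k+2\ell}$ after adding $F_{2k+1}$), and the resulting gap $F_{2k+1}$ agrees.
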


\begin{proof}
When $\ell = 0$, we are comparing $F_{2k}$ and $2F_{2k}$, and the lemma is obviously true. Suppose that $\ell \ge 1$. Since $\{n\in A_{2k}: \max\mathcal{CG}(n) = \max\mathcal{CG}_1(n) = F_{2k + 2\ell}\}$ is formed by adding $F_{2k+2\ell}$ to each number in $\{n\in A_{2k}: \max\mathcal{CG}(n) \le F_{2k + 2\ell-2}\}$, Lemma \ref{l1} gives
\begin{align*}\max\{n\in A_{2k}&: \max\mathcal{CG}(n) = \max\mathcal{CG}_1(n) = F_{2k + 2\ell}\}\\
&\ =\ F_{2k} + F_{2k+2} + \cdots + F_{2k+2\ell-4} + 2F_{2k + 2\ell-2} + F_{2k+2\ell}.\end{align*}
On the other hand, 
$$\min \{n\in A_{2k}: \max\mathcal{CG}(n) = \max\mathcal{CG}_2(n) = F_{2k + 2\ell}\} \ =\ F_{2k} + 2F_{2k + 2\ell}.$$
We need only to verify that
$$F_{2k} + 2F_{2k + 2\ell}\ >\ F_{2k} + F_{2k+2} + \cdots + F_{2k+2\ell-4} + 2F_{2k + 2\ell-2} + F_{2k+2\ell}.$$
Equivalently,
$$\label{e1}F_{2k+1} + F_{2k+2\ell}\ >\ F_{2k+1} + F_{2k+2} + \cdots + F_{2k+2\ell-4} + 2F_{2k + 2\ell-2},$$
which is true because the right side of the inequality is equal to $F_{2k+2\ell}$. 
\end{proof}

We have used in the proof of Lemmas \ref{l2} and \ref{l3} the fact that for $\ell\ge 1$, 
\begin{align*}&\{n\in A_{2k}\,:\, \max\mathcal{CG}(n) = \max\mathcal{CG}_1(n) = F_{2k + 2\ell}\}\\
\ =\ &\{n\in A_{2k}\,:\, \max\mathcal{CG}(n) \le F_{2k + 2\ell - 2}\} + F_{2k+2\ell}.
\end{align*}
In other words, integers in $\{n\in A_{2k}\,:\, \max\mathcal{CG}(n) = \max\mathcal{CG}_1(n) = F_{2k + 2\ell}\}$ are formed by adding $F_{2k+2\ell}$ to the all previous rows in Table 1. We now describe how to form integers in $\{n\in A_{2k}\,:\, \max\mathcal{CG}(n) = \max\mathcal{CG}_2(n) = F_{2k + 2\ell}\}$. 

\begin{lem}\label{l5} For $\ell\ge 1$, we have
\begin{align*}&\{n\in A_{2k}\,:\, \max\mathcal{CG}(n) = \max\mathcal{CG}_2(n) = F_{2k + 2\ell}\}\\
\ =\ &\left\{n\in A_{2k}\,:\, n \le \sum_{i=0}^{\ell-1}F_{2k+2i}\right\} + 2F_{2k+2\ell}.
\end{align*}
\end{lem}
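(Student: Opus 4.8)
The plan is to prove the identity by deleting, from each member $n$ of the left-hand side, its (unique) maximal summand, which carries coefficient $2$. Write $M_\ell := \sum_{i=0}^{\ell-1} F_{2k+2i}$ for the integer occurring on the right-hand side, and record the elementary identity $M_j = F_{2k+2j-1} - F_{2k-1}$, which yields the \emph{sub-base} bound $M_j < F_{2k+2j}$ for every $j \ge 0$. If $n$ belongs to the left-hand set, its Chung-Graham decomposition is $\sum_{i=0}^{\ell} c_i F_{2k+2i}$ with $c_\ell = 2$ and $c_0 \ge 1$, and deleting the top term leaves a coefficient string that still satisfies the Chung-Graham condition and still has $F_{2k}$ as its smallest summand; hence $s := n - 2F_{2k+2\ell} \in A_{2k}$ with $\max\mathcal{CG}(s) \le F_{2k+2\ell-2}$. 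Conversely, given any $s \in A_{2k}$ with $\max\mathcal{CG}(s) \le F_{2k+2\ell-2}$ and decomposition $\sum_{i=0}^{\ell-1} c_i F_{2k+2i}$, the string obtained by appending the coefficient $c_\ell = 2$ is, by uniqueness in Theorem \ref{CGlemma}, the Chung-Graham decomposition of $s + 2F_{2k+2\ell}$ exactly when it is admissible, i.e.\ exactly when for every $j < \ell$ with $c_j = 2$ there is an index $i$ with $j < i < \ell$ and $c_i = 0$. Thus the whole lemma reduces to the claim that, for such $s$,
\[ s \le M_\ell \qquad\Longleftrightarrow\qquad \text{there is no } j<\ell \text{ with } c_j = 2 \text{ and } c_{j+1},\dots,c_{\ell-1} \text{ all nonzero.} \]

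To show that $s \le M_\ell$ forces the absence of such a $j$, I would argue contrapositively: if $c_j = 2$ with $c_{j+1},\dots,c_{\ell-1}$ all nonzero, then $s \ge 2F_{2k+2j} + \sum_{i=j+1}^{\ell-1} F_{2k+2i}$, so $s - M_\ell \ge F_{2k+2j} - M_j > 0$, a contradiction. For the converse, suppose no such $j$ exists. If no coefficient of $s$ equals $2$, then $s \le M_\ell$ trivially. Otherwise, applying the hypothesis to the largest index carrying a $2$ shows that a $0$ occurs above it; let $i^*$ be the largest index with $c_{i^*} = 0$ (so $i^* \ge 1$ as $c_0 \ge 1$). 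No $2$ can occur above position $i^*$ — otherwise that $2$, together with the necessarily nonzero coefficients strictly between it and position $\ell$, would be forbidden — so $c_{i^*+1} = \cdots = c_{\ell-1} = 1$ and $s = s' + \sum_{i=i^*+1}^{\ell-1} F_{2k+2i}$, where $s' := \sum_{i=0}^{i^*-1} c_i F_{2k+2i}$ lies in $A_{2k}$ with $\max\mathcal{CG}(s') \le F_{2k+2i^*-2}$. By Lemma \ref{l1} (together with the monotonicity of $N$), $s' \le N(i^*-1) = M_{i^*} + F_{2k+2i^*-2}$, hence $M_\ell - s \ge F_{2k+2i^*} - F_{2k+2i^*-2} = F_{2k+2i^*-1} > 0$, so $s \le M_\ell$.

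Assembling these facts, the left-hand set equals $\{\, s \in A_{2k} : \max\mathcal{CG}(s) \le F_{2k+2\ell-2},\ s \le M_\ell \,\} + 2F_{2k+2\ell}$, and since $M_\ell < F_{2k+2\ell}$ the bound $s \le M_\ell$ already forces $\max\mathcal{CG}(s) \le F_{2k+2\ell-2}$ (otherwise $s \ge F_{2k} + F_{2k+2\ell} > M_\ell$); so this is precisely $\{\, s \in A_{2k} : s \le M_\ell \,\} + 2F_{2k+2\ell}$, the right-hand side. I expect the main obstacle to be the converse half of the displayed claim: one must observe that admissibility constrains the shape of the coefficient string just enough — everything strictly above the last $0$ is pinned to $1$ — so that the remaining free portion $s'$ is controlled by Lemma \ref{l1}; after that, it is bookkeeping with the sub-base bound $M_j = F_{2k+2j-1} - F_{2k-1} < F_{2k+2j}$. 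Alternatively, one may quote the explicit list of admissible shapes $f_1,\dots,f_{m+1}$ isolated in the proof of Lemma \ref{l2} with $m = \ell-1$ and check, via Lemma \ref{l1}, that these are exactly the members of $A_{2k}$ not exceeding $M_\ell$ — the same computation, reorganized.
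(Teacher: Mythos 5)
Your proof is correct and follows essentially the same route as the paper: the crux --- showing that $s \le M_\ell$ forces admissibility of appending the coefficient $2$ --- is exactly the paper's contrapositive argument via the inequality $\sum_{i=0}^{j} F_{2k+2i} < 2F_{2k+2j}$. The only difference is in the easy inclusion, where the paper simply quotes the maximality statement of Lemma \ref{l1} (the largest element of the left-hand set is $\sum_{i=0}^{\ell-1}F_{2k+2i} + 2F_{2k+2\ell}$), whereas your structural analysis via the largest zero index $i^*$ reproves this bound by hand; both are fine, and your closing observation that $s\le M_\ell$ already forces $\max\mathcal{CG}(s)\le F_{2k+2\ell-2}$ is a detail the paper leaves implicit.
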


\begin{proof}
By Lemma \ref{l1}, the largest integer $n\in A_{2k}$ with $\max\mathcal{CG}(n) = \max\mathcal{CG}_2(n) = F_{2k + 2\ell}$ is
$$\sum_{i=0}^{\ell-1}F_{2k+2i} + 2F_{2k+2\ell};$$
hence, \begin{align*}&\{n\in A_{2k}\,:\, \max\mathcal{CG}(n) = \max\mathcal{CG}_2(n) = F_{2k + 2\ell}\}\\
\ \subset\ &\left\{n\in A_{2k}\,:\, n \le \sum_{i=0}^{\ell-1}F_{2k+2i}\right\} + 2F_{2k+2\ell}.
\end{align*}
We prove the reverse inclusion. Pick $n\in A_{2k}$ such that 
\begin{equation}\label{e4} n \ \le\ \sum_{i=0}^{\ell-1}F_{2k+2i}.\end{equation}
Write the Chung-Graham decomposition of $n$ as
$$n\ =\ c_0F_{2k} + c_1F_{2k+2} + \cdots + c_{\ell-1} F_{2k+2\ell-2},$$
for $c_i\in \{0, 1, 2\}$. 
Suppose, for a contradiction, that 
$$c_0F_{2k} + c_1F_{2k+2} + \cdots + c_{\ell-1} F_{2k+2\ell-2} + 2F_{2k+2\ell}$$ 
is not a Chung-Graham decomposition. Then there is $0\le j\le \ell-1$ such that $c_j = 2$ and $c_{i} = 1$ for all $i\in [j+1, \ell-1]$. It follows that
\begin{equation}\label{e5}n \ \ge\ 2F_{2k+2j} + \sum_{i=j+1}^{\ell-1} F_{2k+2i}\end{equation}
From \eqref{e4} and \eqref{e5}, 
$$\sum_{i=0}^{\ell-1}F_{2k+2i}\ \ge\ 2F_{2k+2j} + \sum_{i=j+1}^{\ell-1} F_{2k+2i}.$$
Equivalently, 
$$\sum_{i=0}^{j} F_{2k+2i}\ \ge\ 2F_{2k+2j},$$
which is a contradiction. Hence,
$$c_0F_{2k} + c_1F_{2k+2} + \cdots + c_{\ell-1} F_{2k+2\ell-2} + 2F_{2k+2\ell}$$
is a Chung-Graham decomposition. Therefore, 
$$n+2F_{2k+2\ell}\in \{n\in A_{2k}\,:\, \max\mathcal{CG}(n) = \max\mathcal{CG}_2(n) = F_{2k + 2\ell}\}.$$
This completes our proof. 
\end{proof}

\section{Proof of the main theorem}\label{maintheoproof}

Using what we know about Table 1 from Section \ref{tableprop}, we now prove an identity that equates the difference between two earlier consecutive rows with the difference between two later consecutive rows in the table.

\begin{prop}\label{kp} Fix $\ell\ge 1$. 
For $1+F_{2\ell+1}\le j \le F_{2\ell+3}-1$, we have
\begin{equation}\label{e27}q(j+1)-q(j)\ =\ q(j-F_{2\ell+1}+1)-q(j-F_{2\ell+1}).\end{equation}
\end{prop}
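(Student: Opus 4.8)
The plan is to extract from Section~\ref{tableprop} an exact description of rows $1+F_{2\ell+1}$ through $F_{2\ell+3}$ of Table~1 and then deduce \eqref{e27} from two simple identities for the sequence $q$.

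Fix $\ell\ge 1$. By Lemma~\ref{l2}, rows $1+F_{2\ell+1},\dots,F_{2\ell+3}$ are exactly the $n\in A_{2k}$ with $\max\mathcal{CG}(n)=F_{2k+2\ell}$; by Corollary~\ref{c2} and Lemma~\ref{l3}, the first $F_{2\ell+1}$ of these (rows $1+F_{2\ell+1},\dots,2F_{2\ell+1}$) are the ones carrying coefficient $1$ on $F_{2k+2\ell}$, and the last $F_{2\ell}$ (rows $1+2F_{2\ell+1},\dots,F_{2\ell+3}$) the ones carrying coefficient $2$. Using $\{n\in A_{2k}:\max\mathcal{CG}(n)=\max\mathcal{CG}_1(n)=F_{2k+2\ell}\}=\{n\in A_{2k}:\max\mathcal{CG}(n)\le F_{2k+2\ell-2}\}+F_{2k+2\ell}$ (the identity recorded just before Lemma~\ref{l5}), the fact (from Corollary~\ref{c1}) that $\{n\in A_{2k}:\max\mathcal{CG}(n)\le F_{2k+2\ell-2}\}$ is an initial segment of Table~1, and its size $F_{2\ell+1}$ (the count computed inside the proof of Lemma~\ref{l2}), one sees that this initial segment is precisely rows $1,\dots,F_{2\ell+1}$; since translation preserves order,
\[
q(F_{2\ell+1}+s)\ =\ q(s)+F_{2k+2\ell}\qquad(1\le s\le F_{2\ell+1}).
\]
Likewise, Lemma~\ref{l5} gives $\{n\in A_{2k}:\max\mathcal{CG}(n)=\max\mathcal{CG}_2(n)=F_{2k+2\ell}\}=\{n\in A_{2k}:n\le\sum_{i=0}^{\ell-1}F_{2k+2i}\}+2F_{2k+2\ell}$, and the set on the right is an initial segment of Table~1 of size $F_{2\ell}$ (by Corollary~\ref{c2} and Lemma~\ref{l5}), hence equals rows $1,\dots,F_{2\ell}$, so
\[
q(2F_{2\ell+1}+t)\ =\ q(t)+2F_{2k+2\ell}\qquad(1\le t\le F_{2\ell}).
\]

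Given these two identities, \eqref{e27} follows by a case split on where $j$ and $j+1$ lie relative to $2F_{2\ell+1}$. If $1+F_{2\ell+1}\le j\le 2F_{2\ell+1}-1$, rewriting $q(j)$ and $q(j+1)$ by the first identity cancels the copies of $F_{2k+2\ell}$ and leaves exactly $q(j-F_{2\ell+1}+1)-q(j-F_{2\ell+1})$. If $1+2F_{2\ell+1}\le j\le F_{2\ell+3}-1$, rewriting $q(j),q(j+1)$ by the second identity while observing that $j-F_{2\ell+1}$ and $j-F_{2\ell+1}+1$ then lie in $[1+F_{2\ell+1},2F_{2\ell+1}]$ (so the first identity applies to them) makes both sides of \eqref{e27} equal $q(j-2F_{2\ell+1}+1)-q(j-2F_{2\ell+1})$. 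In the lone boundary case $j=2F_{2\ell+1}$, the relations $q(2F_{2\ell+1}+1)=q(1)+2F_{2k+2\ell}$, $q(2F_{2\ell+1})=q(F_{2\ell+1})+F_{2k+2\ell}$, and $q(F_{2\ell+1}+1)=q(1)+F_{2k+2\ell}$ show that both sides of \eqref{e27} collapse to $q(1)-q(F_{2\ell+1})+F_{2k+2\ell}$. One should also dispatch the degenerate small case $\ell=1$ (where the second sub-block is a single row and the third case above is vacuous), which is immediate.

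I expect the real content to be purely bookkeeping: checking that the index ranges in the two identities cover every subcase — notably $j-F_{2\ell+1}+1\le 2F_{2\ell+1}$ in the second case (equivalently $F_{2\ell}\le F_{2\ell+1}$), and admissibility of $s=F_{2\ell+1}$ in the first identity for the boundary case — and pinning down which initial segments of Table~1 the sets $\{n\in A_{2k}:\max\mathcal{CG}(n)\le F_{2k+2\ell-2}\}$ and $\{n\in A_{2k}:n\le\sum_{i=0}^{\ell-1}F_{2k+2i}\}$ are; there Corollary~\ref{c1} and the row counts from Lemmas~\ref{l2} and~\ref{l5} (via Corollary~\ref{c2}) do the work. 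No property of the golden string from Section~\ref{gold} is needed here.
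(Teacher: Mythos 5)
Your proof is correct and follows essentially the same route as the paper: both rest on the block-translation structure of Table~1 (the coefficient-$1$ block being rows $1,\dots,F_{2\ell+1}$ shifted by $F_{2k+2\ell}$, and, via Lemma~\ref{l5} and Corollary~\ref{c2}, the coefficient-$2$ block being rows $1,\dots,F_{2\ell}$ shifted by $2F_{2k+2\ell}$), followed by the same three-way case split on $j$. The only difference is cosmetic: at the boundary $j=2F_{2\ell+1}$ the paper verifies \eqref{e27} by computing the four relevant rows explicitly from Lemmas~\ref{l1}--\ref{l3} and Corollary~\ref{c2}, whereas you deduce it directly from the two translation identities, which is a slightly cleaner way to reach the same conclusion.
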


\begin{proof}
Choose $\ell\ge 1$. By Lemma \ref{l2}, the numbers in $\{n: \max\mathcal{CG}(n)\le F_{2k+2\ell-2}\}$ lie from the $1$\textsuperscript{st} row to the $F_{2\ell+1}$\textsuperscript{th} row, inclusively. Since
\begin{align*}\{n: \max\mathcal{CG}(n)\le F_{2k+2\ell-2}\} &+ F_{2k+2\ell}\\
&\ =\ \{n: \max\mathcal{CG}(n) = \max\mathcal{CG}_1(n) = F_{2k+2\ell}\},\end{align*}
we know that 
\begin{equation}\label{e20}q(i+1)-q(i)\ =\ q(i+F_{2\ell+1}+1)-q(i+F_{2\ell+1})\end{equation}
whenever $1\le i\le F_{2\ell+1}-1$.
Applying the change of variable $j=i+F_{2\ell+1}$, we obtain 
\begin{equation}\label{e25}q(j+1)-q(j) \ =\ q(j-F_{2\ell+1}+1)-q(j-F_{2\ell+1}), 1+F_{2\ell+1}\le j\le 2F_{2\ell+1}-1.\end{equation}

Furthermore, by Corollary \ref{c2} and Lemmas \ref{l3} and \ref{l5}, 
\begin{equation}\label{e21}q(i+1) - q(i) \ =\ q(i+2F_{2\ell+1}+1)-q(i+2F_{2\ell+1}), 1\le i\le F_{2\ell}-1.\end{equation}
From \eqref{e20} and \eqref{e21}, 
$$q(i+F_{2\ell+1}+1)-q(i+F_{2\ell+1})\ =\ q(i+2F_{2\ell+1}+1)-q(i+2F_{2\ell+1}), 1\le i\le F_{2\ell}-1.$$
Using the change of variable $j=i+2F_{2\ell+1}$, we have 
\begin{equation}\label{e26}q(j+1)-q(j)=q(j-F_{2\ell+1}+1)-q(j-F_{2\ell+1}), 1+2F_{2\ell+1}\le j\le F_{2\ell+3} - 1.\end{equation}

Thanks to \eqref{e25} and \eqref{e26}, it remains to verify \eqref{e27} when $j = 2F_{2\ell+1}$; that is,
\begin{equation}\label{e28}q(2F_{2\ell+1}+1)-q(2F_{2\ell+1})\ =\ q(F_{2\ell+1}+1)-q(F_{2\ell+1}).\end{equation}
By Lemmas \ref{l1}, \ref{l2},\ref{l3}, and Corollary \ref{c2},  
\begin{align*}
    q(F_{2\ell+1}) &\ =\ \sum_{i=0}^{\ell-2} F_{2k+2i} + 2F_{2k+2\ell-2},\\
    q(F_{2\ell+1} + 1) &\ =\ F_{2k} + F_{2k+2\ell},\\
    q(2F_{2\ell+1})&\ =\ F_{2k} + F_{2k+2} + \cdots + F_{2k+2\ell-4} + 2F_{2k + 2\ell-2} + F_{2k+2\ell},\\
    q(2F_{2\ell+1}+1)&\ =\ F_{2k} + 2F_{2k+2\ell}.
\end{align*}
These confirm \eqref{e28}, and we are done.
\end{proof}

We are now ready to prove the following key lemma to describe all integers in $A_{2k}$.
\begin{lem}\label{kl}
For $j\ge 2$,
\begin{equation}\label{e30}
   q(j+1)-q(j)\ =\ \begin{cases}F_{2k}\mbox{ if the }(j-1)\mbox{\textsuperscript{th} letter of }\mathcal{S}\mbox{ is } A,\\ F_{2k+1}\mbox{ if the }(j-1)\mbox{\textsuperscript{th} letter of }\mathcal{S}\mbox{ is } B.\end{cases} 
\end{equation}
\end{lem}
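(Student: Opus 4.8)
The plan is to prove \eqref{e30} by induction on the "level" $\ell$, where level $\ell$ refers to the block of rows indexed by $j \in [F_{2\ell-1}+1, F_{2\ell+1}]$ (for $\ell \ge 1$), which by Lemma \ref{l2} is precisely the set of $n \in A_{2k}$ with $\max \mathcal{CG}(n) = F_{2k+2\ell-2}$; I will also need the tiny base data for $j \in \{2,3,4,5\}$ coming directly from the displayed Table 1. The engine of the induction is Proposition \ref{kp}: for fixed $\ell \ge 1$ and $1+F_{2\ell+1} \le j \le F_{2\ell+3}-1$, the consecutive difference $q(j+1)-q(j)$ equals $q(j-F_{2\ell+1}+1)-q(j-F_{2\ell+1})$. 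So if I already know the difference pattern on rows $[2, F_{2\ell+1}]$, Proposition \ref{kp} transports it verbatim to rows $[F_{2\ell+1}+1, F_{2\ell+3}-1]$, shifting the row index by $+F_{2\ell+1}$. Translating this into the language of $\mathcal{S}$: the $(j-1)$\textsuperscript{th} letter of $\mathcal{S}$ for $j$ in the new range is being compared to the $(j-F_{2\ell+1}-1)$\textsuperscript{th} letter, i.e.\ a shift by $F_{2\ell+1}$ in the letter index. The identity I need from the golden string is exactly that shifting the reading position by $F_{2\ell+1}$ within the appropriate window does not change the letter — this is where Properties (b), (c) and Proposition \ref{kp1} of $\mathcal{S}$ come in.

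Concretely, I would proceed as follows. First, verify \eqref{e30} directly for the initial rows (say $j = 2,3,4,5$) against Table 1; this pins down the first few letters $B,A,B,B,\dots$ of $\mathcal{S}$ matching the claimed differences $F_{2k+1}, F_{2k}, F_{2k+1}, F_{2k+1}, \dots$. Second, set up the induction hypothesis: \eqref{e30} holds for all $2 \le j \le F_{2\ell+1}$. Third, fix such an $\ell$ and take $j$ with $F_{2\ell+1}+1 \le j \le F_{2\ell+3}-1$. Apply Proposition \ref{kp} to get $q(j+1)-q(j) = q(j'+1)-q(j')$ with $j' = j - F_{2\ell+1} \in [1, F_{2\ell+3}-1-F_{2\ell+1}] = [1, F_{2\ell+2}-1] \subseteq [1, F_{2\ell+1}]$ — wait, I need $j' \ge 2$ for the hypothesis, so the boundary case $j' = 1$ (i.e.\ $j = F_{2\ell+1}+1$) must be handled separately using the explicit values of $q(F_{2\ell+1}+1), q(F_{2\ell+1}+2)$ from Lemmas \ref{l1}, \ref{l3} and Corollary \ref{c2}, together with the fact (Property (f2)) that the $F_{2\ell+1}$\textsuperscript{th} letter of $\mathcal{S}$ is $A$, which should match a difference of $F_{2k}$ there. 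For $j' \ge 2$, the induction hypothesis gives $q(j'+1)-q(j')$ in terms of the $(j'-1)$\textsuperscript{th} letter of $\mathcal{S}$. Fourth — the crux — show that the $(j-1)$\textsuperscript{th} letter of $\mathcal{S}$ equals the $(j'-1)$\textsuperscript{th} letter, i.e.\ that the letters at positions $p$ and $p + F_{2\ell+1}$ agree whenever $F_{2\ell+1} \le p \le F_{2\ell+3}-2$. This is a statement purely about $\mathcal{S}$: by Property (b), $S_{2\ell+2}$ gives the first $F_{2\ell+3}$ letters, and writing $S_{2\ell+2} = S_{2\ell+1}:S_{2\ell}$ with $|S_{2\ell+1}| = F_{2\ell+2}$, one checks using Proposition \ref{kp1} (or a direct decomposition as in its proof, $S_{2\ell+2} = S_{2\ell}:S_{2\ell}:(\text{tail})$ type manipulations) that the second "copy" region aligns with the first after the shift.

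The main obstacle I anticipate is Step four: cleanly matching up the row-index shift of Proposition \ref{kp} (a shift by $F_{2\ell+1}$ rows) with a position shift in $\mathcal{S}$ over the correct window, and making the window endpoints line up exactly — there is an off-by-one flavor here because the letter governing $q(j+1)-q(j)$ is the $(j-1)$\textsuperscript{th}, not the $j$\textsuperscript{th}, and the ranges in Proposition \ref{kp} and in Lemma \ref{l2} are stated with $F_{2\ell+1}$ and $F_{2\ell+3}$ boundaries that must be reconciled with the golden-string self-similarity windows of length $F_{2\ell+2}$. I expect the boundary rows $j = F_{2\ell+1}+1$ and $j = F_{2\ell+3}$ (the seam between level $\ell$ and level $\ell+1$, where $\max\mathcal{CG}$ jumps) to require the explicit row formulas from Lemmas \ref{l1}–\ref{l5} rather than the transport identity, and these are exactly the positions where Properties (f1)–(f4) of $\mathcal{S}$ should be invoked to confirm the letter is $B$ or $A$ as needed. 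Once the window bookkeeping is set up correctly, each individual verification is a short Fibonacci identity.
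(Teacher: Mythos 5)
Your overall strategy (induction along the blocks of Table 1, using Proposition \ref{kp} to transport consecutive differences and a self-similarity property of $\mathcal{S}$ to transport the letters) is the same as the paper's, but there is a concrete error in the reduction step that leaves a substantial portion of each block unproved. You claim that after one application of Proposition \ref{kp} the shifted index satisfies $j' = j - F_{2\ell+1} \in [1, F_{2\ell+2}-1] \subseteq [1, F_{2\ell+1}]$, so that the inductive hypothesis applies. That containment is false for $\ell \ge 2$: since $F_{2\ell+2}-1 = F_{2\ell+1} + F_{2\ell} - 1 > F_{2\ell+1}$, the indices $j' \in [F_{2\ell+1}+1, F_{2\ell+2}-1]$ (equivalently, the original rows $j \in [2F_{2\ell+1}+1, F_{2\ell+3}-1]$, roughly the last $F_{2\ell}$ rows of the block) fall outside the range of the inductive hypothesis. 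For example, with $\ell = 2$ the step must cover $j$ up to $F_7 - 1 = 12$, one shift brings $j = 12$ down to $j' = 7$, but the hypothesis only reaches $j' \le F_5 = 5$.

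To close this you need a second application of Proposition \ref{kp}, reducing those rows by $2F_{2\ell+1}$ down to $[1, F_{2\ell}-1]$, and correspondingly a \emph{different} golden-string alignment: not Proposition \ref{kp1} (shift by $F_{2\ell+1}$) but the statement that the letters at positions $p$ and $p + 2F_{2\ell+1}$ agree on the relevant window --- this is exactly Proposition \ref{kp2}, which your outline never invokes. The double shift also creates two additional seam rows, $j = 2F_{2\ell+1}$ and $j = 2F_{2\ell+1}+1$, which (like your acknowledged seams at $j = F_{2\ell+1}+1$ and at the top of the block) must be computed explicitly from Lemmas \ref{l1}--\ref{l5} and matched against Properties (f3) and (f4) of $\mathcal{S}$. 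You correctly caught the lower boundary case $j' = 1$ but missed the upper one; once the range $[2F_{2\ell+1}+1, F_{2\ell+3}-1]$ is handled by the double shift together with Proposition \ref{kp2} and the two extra seam rows, the argument matches the paper's six-case induction and goes through.
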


\begin{proof}
It suffices to prove that \eqref{e30} is true for all $j\le F_{2m+1}-1$ for any arbitrary $m\in \mathbb{N}$. We do so by induction.

Base case: for $m=3$, we can see from Table 1 that \eqref{e30} is true for all $j\le F_{7}-1$. 

Inductive hypothesis: suppose that \eqref{e30} is true for $j\le F_{2m+1}-1$ for some $m\ge 3$. We need to show that it is true for all $j\le F_{2m+3}-1$. We proceed by a case analysis.

Case 1: $j = F_{2m+1}$. By Lemmas \ref{l1} and \ref{l2}, we have
\begin{align*}
q(F_{2m+1}+1)-q(F_{2m+1})&\ =\ F_{2k} + F_{2k+2m} - (F_{2k} + \cdots + F_{2k+2m-4} + 2F_{2k+2m-2})\\
&\ =\ F_{2k+1}.
\end{align*}
By Property f) in Section \ref{gold}, \eqref{e30} is true when $j = F_{2m+1}$. 

Case 2: $j = F_{2m+1}+1$. By Lemma \ref{l2}, 
$$
q(F_{2m+1}+2)-q(F_{2m+1}+1)\ = \ (2F_{2k} + F_{2k+2m}) - (F_{2k}+F_{2k+2m})\ =\ F_{2k}. 
$$
By Property f) in Section \ref{gold}, \eqref{e30} is true when $j = F_{2m+1}+1$.

Case 3: $F_{2m+1}+2\le j\le 2F_{2m+1}-1$. It follows from Proposition \ref{kp} that 
$$q(j+1)-q(j)\ =\ q(j+1-F_{2m+1}) - q(j-F_{2m+1}).$$
Thanks to Proposition \ref{kp1} and the fact that
$$j - F_{2m+1}\ \le\ 2F_{2m+1}-1-F_{2m+1} \ =\ F_{2m+1}-1,$$
the inductive hypothesis guarantees that \eqref{e30} is true for $F_{2m+1}+2\le j\le 2F_{2m+1}-1$.

Case 4: $j = 2F_{2m+1}$. It follows from Lemmas \ref{l1}, \ref{l2}, \ref{l3}, and Corollary \ref{c2} that 
\begin{align*}
&q(2F_{2m+1}+1) - q(2F_{2m+1})\\
\ =\ &(F_{2k} + 2F_{2k+2m})-(F_{2k} + \cdots + 2F_{2k+2m-2} + F_{2k+2m})\ =\ F_{2k+1}. 
\end{align*}
Property f) in Section \ref{gold} confirms that \eqref{e30} is true for $j = 2F_{2m+1}$. 

Case 5: $j = 2F_{2m+1}+1$. By Lemmas \ref{l2}, \ref{l3}, and Corollary \ref{c2}, 
$$q(2F_{2m+1}+2) - q(2F_{2m+1}+1)\ =\ (2F_{2k} + 2F_{2k+2m}) - (F_{2k} + 2F_{2k+2m})\ =\ F_{2k}.$$
Property f) in Section \ref{gold} confirms that \eqref{e30} is true for $j = 2F_{2m+1}+1$.

Case 6: $2F_{2m+1}+2\le j\le F_{2m+3}-1$. According to Proposition \ref{kp}, 
$$q(j+1)-q(j)\ =\ q(j+1-2F_{2m+1}) - q(j-2F_{2m+1}).$$
Since 
$$j-2F_{2m+1}\ \le\ F_{2m+3}-1-2F_{2m+1}\ =\ F_{2m}-1,$$
the inductive hypothesis can be applied. Together with Proposition \ref{kp2}, we know that \eqref{e30} holds for 
$2F_{2m+1}+2\le j\le F_{2m+3}-1$.

This completes our proof. 
\end{proof}

\begin{prop} For $k\ge 1$, we have
$$A_{2k} \ =\ \left\{F_{2k},  (n+2)F_{2k} + \left\lfloor \frac{n+1}{\phi}\right\rfloor F_{2k-1}\,:\,n\ge 0\right\}.$$
\end{prop}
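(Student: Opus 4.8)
The plan is to derive a closed form for the ordered list $q(1)<q(2)<q(3)<\cdots$ of elements of $A_{2k}$ directly from Lemma \ref{kl}, which describes all the consecutive differences $q(j+1)-q(j)$ in terms of the golden string $\mathcal{S}$. First I would note the base values: from Table 1 (or Lemma \ref{l1} with $m=0$), $q(1)=F_{2k}$ and $q(2)=2F_{2k}$, so the $n=0$ case of the claimed formula, namely $2F_{2k}+\lfloor 1/\phi\rfloor F_{2k-1}=2F_{2k}$, matches $q(2)$. Then for $n\ge 1$ I would telescope: since the difference $q(j+1)-q(j)$ is $F_{2k}$ when the $(j-1)$th letter of $\mathcal{S}$ is $A$ and $F_{2k+1}$ when it is $B$, we get
\begin{equation*}
q(n+2)\ =\ q(2) + \sum_{j=2}^{n+1}\bigl(q(j+1)-q(j)\bigr)\ =\ 2F_{2k} + n F_{2k} + N_B(n)\,(F_{2k+1}-F_{2k}),
\end{equation*}
where $N_B(n)$ counts the $B$'s among letters $1$ through $n$ of $\mathcal{S}$ (the relevant indices $j-1$ run over $1,\dots,n$ as $j$ runs over $2,\dots,n+1$). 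Using $F_{2k+1}-F_{2k}=F_{2k-1}$ and Property (e) of Section \ref{gold}, i.e.\ $N_B(n)=\lfloor (n+1)/\phi\rfloor$, this becomes $q(n+2)=(n+2)F_{2k}+\lfloor (n+1)/\phi\rfloor F_{2k-1}$, which is exactly the claimed description.

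The key steps, in order, are: (i) record $q(1)=F_{2k}$ and verify $q(2)=2F_{2k}$ agrees with the $n=0$ term; (ii) invoke Lemma \ref{kl} to write each difference $q(j+1)-q(j)$ as $F_{2k}$ or $F_{2k}+F_{2k-1}$ according to whether the $(j-1)$th letter of $\mathcal{S}$ is $A$ or $B$; (iii) telescope $q(n+2)$ from $q(2)$, carefully tracking that summing over $j=2,\dots,n+1$ reads off exactly the first $n$ letters of $\mathcal{S}$; (iv) substitute $N_B(n)=\lfloor(n+1)/\phi\rfloor$ from \eqref{e2}; (v) observe that $A_{2k}=\{q(m):m\ge 1\}=\{q(1)\}\cup\{q(n+2):n\ge 0\}$, which is precisely the displayed set. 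A small bookkeeping point worth stating explicitly is the index shift: the difference appearing at position $j$ is governed by letter $j-1$, so the $n$ differences $q(2)\to q(3)\to\cdots\to q(n+2)$ are governed by letters $1,2,\dots,n$, matching the definition of $N_B(n)$.

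I expect the only genuine obstacle to be the indexing in step (iii)—making sure that $\sum_{j=2}^{n+1}(q(j+1)-q(j))$ picks up the contribution of $B$'s among letters $1$ through $n$ (and not $1$ through $n+1$ or $2$ through $n+1$), so that \eqref{e2} applies with argument exactly $n$. Everything else is a routine telescoping computation together with the Fibonacci identity $F_{2k+1}-F_{2k}=F_{2k-1}$. One should also remark that $q(1)=F_{2k}$ is not of the form $(n+2)F_{2k}+\lfloor(n+1)/\phi\rfloor F_{2k-1}$ for any $n\ge 0$ (the smallest such value is $q(2)=2F_{2k}$), which is why the element $F_{2k}$ is listed separately in the statement.
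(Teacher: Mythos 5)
Your proposal is correct and follows essentially the same route as the paper: both telescope the differences given by Lemma \ref{kl}, count the $A$'s and $B$'s among the first $n$ letters of $\mathcal{S}$, substitute $N_B(n)=\lfloor(n+1)/\phi\rfloor$ from \eqref{e2}, and finish with $F_{2k+1}-F_{2k}=F_{2k-1}$. Your explicit attention to the index shift (letter $j-1$ governing the difference at position $j$) is a helpful clarification of a point the paper leaves implicit.
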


\begin{proof}
By Lemma \ref{kl}, we have
$$A_{2k} \ =\ \{F_{2k},  2F_{2k} + a(n)F_{2k} + b(n)F_{2k+1}\,:\,n\ge 0\},$$
where $a(n)$ and $b(n)$ are the number of $A$'s and $B$'s, respectively, among the first $n$ letters of $\mathcal{S}$. Due to \eqref{e2}, 
\begin{align*}
A_{2k}&\ =\ \left\{F_{2k},  2F_{2k} + \left(n-\left\lfloor \frac{n+1}{\phi}\right\rfloor\right)F_{2k} + \left\lfloor \frac{n+1}{\phi}\right\rfloor F_{2k+1}\,:\,n\ge 0\right\}\\
&\ =\ \left\{F_{2k},  (n+2)F_{2k} + \left\lfloor \frac{n+1}{\phi}\right\rfloor F_{2k-1}\,:\,n\ge 0\right\}.
\end{align*}
\end{proof}

\begin{proof}[Proof of Theorem \ref{mtheo}]
For $1\le N < k$, the set of integers whose Chung-Graham decomposition has $F_{2k}$ but none of $F_{2N}, \ldots, F_{2k-2}$ is 
$$\left\{j+F_{2k},  j+(n+2)F_{2k} + \left\lfloor \frac{n+1}{\phi}\right\rfloor F_{2k-1}\,:\, 0\le j\le F_{2N}-1, n\ge 0\right\}.$$ 
Indeed,  all Chung-Graham decompositions of the form $\sum_{i=1}^{N-1} c_i F_{2i}$, when added to an integer in $A_{2k}$, gives an integer whose Chung-Graham decomposition has $F_{2k}$ but none of $F_{2N}, \ldots, F_{2k-2}$. Meanwhile, 
$$\left\{\sum_{i=1}^{N-1} c_i F_{2i}\,:\, c_i\mbox{'s satisfy the Chung-Graham decomposition}\right\}\ =\ [0, F_{2N}-1].$$
Therefore,
\begin{align*} B_{2N}&\ :=\ [1, F_{2N}-1]\cup\\
&\bigcup_{k=N+1}^\infty \left\{j + F_{2k}, j+(n+2)F_{2k} + \left\lfloor \frac{n+1}{\phi}\right\rfloor F_{2k-1}\,:\, 0\le j\le F_{2N}-1, n\ge 0\right\}.\end{align*}
\end{proof}


\ \\
\end{document}